\newcommand{\indep}{\perp \!\!\! \perp}
\newcommand{\probb}{\text{I\kern-0.15em P}}
\definecolor{Mygrey}{gray}{0.75}
\definecolor{Cgrey}{gray}{0.4}
\itshape\color{Cgrey},
\ttfamily\color{Mygrey}\tiny,
\newtheorem{definition}{Definition}
\newtheorem{lemma}{Lemma}
\newtheorem{theorem}{Theorem}
\newtheorem{Coro}{Corollary}
\theoremstyle{definition}
\numberwithin{equation}{subsection}
\begin{document}
\bibliographystyle{chicago}

\pagenumbering{roman}

\period{2022-2023}
\dasatype{Master's Thesis}
\students{Carlos García Meixide}
\mainreaderprefix{Adviser:}
\mainreader{Prof.\ Dr.\ Peter Bühlmann}

\submissiondate{March 21st 2023}
\title{Causality in time-to-event semiparametric inference}

\cleardoublepage




\newpage
\newpage
\newpage

\cleardoublepage
\phantomsection

\cleardoublepage
\pagenumbering{arabic}

\thispagestyle{plain}
\begin{center}
    \Large
    \textbf{Causal survival embeddings: non-parametric counterfactual inference under censoring}
      \normalsize  

\begin{center}

\vspace{0.4cm}
\begin{tabular}{cc}
\textbf{Carlos García-Meixide} & \textbf{Marcos Matabuena} \\
ETH Zürich& Harvard School of Public Health \\
\texttt{garciac@ethz.ch} & \texttt{mmatabuena@hsph.harvard.edu} \\
\end{tabular}
\end{center}
            
       
    \vspace{0.9cm}

\end{center}

\begin{abstract}

Model-free time-to-event regression under confounding presents challenges due to biases introduced by causal and censoring sampling mechanisms. This phenomenology poses problems for classical non-parametric estimators like Beran's or the k-nearest neighbours algorithm. In this study, we propose a natural framework that leverages the structure of reproducing kernel Hilbert spaces (RKHS) and, specifically, the concept of kernel mean embedding to address these limitations. Our framework has the potential to enable statistical counterfactual modeling, including counterfactual prediction and hypothesis testing, under right-censoring schemes without assumptions on the regression model form. Through simulations and an application to the SPRINT trial, we demonstrate the practical effectiveness of our method, yielding coherent results when compared to parallel analyses in existing literature. We also provide a theoretical analysis of our estimator through an RKHS-valued empirical process. Our approach offers a novel tool for performing counterfactual survival estimation  in observational studies with incomplete information. It can also be complemented by state-of-the-art algorithms based on semi-parametric and parametric models. 
\end{abstract}  

\section{Introduction}

Treatment effect estimation using survival endpoints is of key interest in statistics and biomedical applications. However, the increasing complexity of the processes structuring clinical research during the last decades tend to preclude collecting the correct data regarding a particular clinical question of interest. As a consequence, observational studies are more and more present in scientific research due to technical limitations that make randomization- the gold standard experimental design practice to protect against unmeasured confounding- impossible.

What is more, extensive warnings have been pointed out in the literature abut causal interpretations of hazard ratios (HR) estimated with \cite{cox1972regression}'s Proportional Hazards (PH) model, even under randomized treatment exposures \citep{hernan2010hazards,sten}. As a matter of fact, HRs merge at each instant the differences between arms that arise from treatment effect with those created by selection bias - intuitively, as time goes by, less patients will remain in the control arm if overall mortality risk is different between the two groups (equivalently, when the treatment is effective) leading to a comparison between unbalanced groups. 

This makes clear the point that there is a need for designing new effect measures within survival analysis that
have a causal interpretation and shed light into time dynamics, for instance time-varying treatment effects. Assume that gynaecologists aim to investigate the effect of an implanted medical device, such as a contraceptive method, on time-to-conception. It is reasonable to consider that the implant gradually deteriorates and it will cease to function as time goes by. 
\citet{martinussen2022causality} has shown that a Cox model fails in this setup despite participants being randomized.

Causal treatment effect assertions are phrased via the potential outcome framework, the fundamental paradigm for statistical analysis of observational data - where treatment is not independent of the covariates \citep*{neyman,rubin}. Numerous studies concentrate on computing the average treatment effect (ATE, see \cite{imbens2004}), which determines the discrepancy between the outcome distributions' means. In observational studies, the parameter ATE is interpreted within a framework \citep{pearl2000models} suitable for causal inference as the outcome that would have been observed if the treatment had been randomly assigned.

Most empirical research on treatment effects typically focuses on estimating mean differences. However, there is also a longstanding interest in developing methods to estimate the impact of treatments on the entire outcome distribution. For example, if a specific treatment's impact is only observed in the outcome distribution's variance, the evaluation of average treatment effects will not be informative in the clinical decision-making. 

A straight generalization would be to focus on the difference between the survival functions of potential outcomes directly on the absolute scale:

$$\mathrm{P}\left(\tilde{T^1}>t\right)-\mathrm{P}\left(\tilde{T^0}>t\right)$$

as it is a causally meaningful quantity (the tilde indicates counterfactual) that does not rely on non-collapsible parameters \citep{aalen2015does} nor quantities whose identifiability relies on unstable assumptions (see Section 5 in \cite{martinussen2022causality}). 

Estimating potential outcome distributions directly is straightforward when treatment assignment is random. For instance, in survival analysis it would suffice to fit one \citet*{kaplan1958nonparametric} curve per arm. However, in observational studies (or randomized experiments with imperfect compliance), this type of analysis becomes challenging \citep{imbensiv}. In this line, distributional extensions of the ATE have been considered in the literature through multiple lenses. For example, \cite{abadie} considers a bootstrap strategy to estimate distributional treatment effects while \cite*{cme} base their work on the theory of reproducing kernel Hilbert spaces (RKHSs).

In general, when data was not generated by a randomized control trial, empirical estimators of treatment effects rely either on the imputation of the so-called propensity score; which is the conditional probability of treatment assignment given the covariates \citep*{rosenbaum1983}, either on matching techniques \cite{zubizarreta2012using}, or on combinations of the previous approaches as double robust estimators \cite{ding2018causal}. The fundamental technique here is named \textit{inverse probability of treatment weighted} estimation \citep{imbens2004}, consisting loosely on performing an empirical inner product between the summands of the unweighted estimator times the reciprocals of the estimated propensity scores. Traditional methods for estimating the latter involve parametric approaches like logistic regression, which rely on a model for treatment propensity. However, incorrect specifications of the model can generate extreme weights and make the estimator unreliable. To address this issue, nonparametric techniques have been proposed \citep{lee2010improving}. Nonetheless, large weights may still be unavoidable even when the propensity score model is correctly specified. 

An alternative procedure to avoid weighting consists in decoupling the difference between \textit{realized outcome}- not potential- survival distributions in a particular form inherited from the econometrics literature \citep{oaxaca,blinder}. This decomposition strategy involves two terms: one driven by shifts in the covariate distributions between groups and other accounting for the distributional treatment effect conditional to the treated arm. This provides a formal mechanism to analyze whether differences that arise from the observed outcomes distributions of each arm truly come from effectiveness of a drug, whether they are just due to the probabilistic structure of the population baseline characteristics, or both.

Interpreting the decomposition mentioned above (see Equation \ref{decomp}, Section \ref{sec:pop}) seen within an RKHS provides the primary motivation for the notion of \textit{causal survival embedding}. The main intuition behind the idea is that these objects allow for further investigation of what mechanism is the origin for potential differences that arise between the observational survival functions of two treatment arms (i.e. Kaplan-Meier curves fitted to isolated data coming from each treatment indicator value). It also serves as a departure point for hypothesis testing of covariate distribution shifts across treatment arms. This would involve a pivot that needs the RKHS norm of differences of functions involving our estimator to be computed, constituting in some sense an aggregated measure. However, our estimator is also useful to study differences arising between observational distributions pointwise.

\subsection{Our results and contributions}

We introduce a general non-parametric estimator under right-censoring of counterfactual survival functions based on statistical and machine learning techniques on RKHSs.

\begin{itemize}

\item Our estimation procedure is a model-free approach based on embedding counterfactual distributions in reproducing kernel Hilbert spaces. This extends the prior work of \cite{cme} to handle censored data-generation environments. In contrast to traditional non-parametric survival estimators like the Beran estimator \citep{gonzalez1994asymptotic}, which typically relies on strong smoothness conditions such as differentiability of density functions; our approach does not require these assumptions (only mild conditions on the moments of the kernel function). This makes our method more flexible and applicable to a wider range of scenarios.

\item In the setting of counterfactural inference, our proposal constitutes one of the first strategies in the literature that adjusts for confounding in non-parametric estimation of survival functions.

\item Theoretically, we are able to provide asymptotic behavior guarantees for our estimator and compute its convergence rate by employing techniques from Empirical Process Theory. We utilize these techniques to deduce the Hadamard-differentiability of an operator that takes values in a reproducing kernel Hilbert space. While the Functional Delta Method \citep{van2000asymptotic} is widely known for its application to general operators in Banach spaces, the interplay between the geometry of RKHSs and von Mises calculus remains relatively unexplored in the literature, with only a few researchers delving into this aspect
\cite{Cárcamo2020,matabuena_american}

\item Our procedure is computationally friendly as the main bottlneck is the one classically present in estimating conditional mean embeddings, and linear estimators as  Kaplan-Meier weights, efficiently implemented in well-known software packages such as \verb|survival|.

\item The present work not only advances the field but also paves the way for more advanced formulations of hypothesis testing \cite{gretton2012kernel} and introduces new forms of clustering based on the concept of Maximum Mean Discrepancy \cite{matabuena2022kernel} in the counterfactual setting. Additionally, the flexibility of the RKHS framework enables us to incorporate complex variables, such as medical images or other functional data objects, as predictors. This expanded capability enhances the applicability of our approach to a wider range of domains and data types.

 \item 
 
We demonstrate the potential of our new models through their application in a relevant domain. Specifically, our models provide valuable insights that support the findings in \cite{stensrudathero}, which suggest that the association between treatment-induced diastolic blood pressure and cardiovascular outcomes may be confounded. This corroborates similar findings reported in the literature, as mentioned in the study \citep{beddhu}, which align with our own results. Furthermore, we evaluate the finite properties of our estimator through a comprehensive simulation study, which further validates its effectiveness.

\end{itemize}


\subsection{Other related work}
Different methods exist for treatment effect estimation in presence of censoring. A general procedure that can be found across the literature consists of the following steps. First, the ATE is causally identified without censoring and then a so-called \textit{Censoring Unbiased Transformation} \citep{rubin2007doubly, suzukawa2004unbiased} is used to create a pseud-population from the observed data in which the conditional mean survival time is the same as in the uncensored population. Second, methodology from semiparametric inference adapts the estimators to the censoring mechanisms \citep{tsiatis2006semiparametric}. 
Alternative estimators of treatment effect include standardizing expected outcomes to a given distribution of the confounders (\cite{robins1986new}'s g methods), inverse probability of treatment weighted (IPTW) estimators and doubly robust estimators \citep{ozenne2020estimation}, which combine the two latter lines. 

The utilization of tools from the RKHS framework for right-censored data is relatively limited, with a primary focus on hypothesis testing \cite{matabuena2019energy,rindt2020kernel, tamara}, albeit outside the context of counterfactual inference. There have also been efforts to perform hypothesis testin using RKHSs in other incomplete information schemes such as missing reponse \citep{computmeth} Our methods extend and generalize these works by providing a unified framework to develop new hypothesis testing approaches within the realm of causal inference.

Importantly, \cite{xue2023} balance covariate functions
over an RKHS to avoid directly modelling the propensity score for estimating causal effects.

\subsection{Organisation of the Paper}
The paper is structured as follows. In Section 2 we rigorously introduce the formal elements that constitute the basis of our work by specifying: the fundamental random variables playing a role, which of them are observable and which are not, and how they interact between them to generate incomplete information and notation for their distribution functions. A self-contained description of the parameters of interest is presented in Section 3, accompanied by an opening introducing the notion of counterfactual distributions in survival analysis. Then we define their counterparts in a Hilbert space, leading to the notion of counterfactual mean embedding. Naturally, in Section 4 we thoroughly develop the estimation theory that is needed in our setting, involving M-estimation on a space of functions that themselves take values in another space of functions. The asymptotic properties of our proposal estimator are investigated in Section 5, starting with preliminary definitions needed for its formalization followed by sufficient conditions for consistency and a convergence rate of non-parametric counterfactual inference under censoring. Sections 6 and 7 are concerned with the results, respectively displaying the diminishing behaviour of variability as sample size increases via a simulation study and illustrating the usefulness of our methodology in a real application case related to cardiology. Finally, Section 8 closes the document with a discussion on the consequences of relaxing the censoring assumptions and other concerns regarding open directions. We close the document with a couple of Appendices containing the mathematical proofs for the results of this paper, followed by an empirical check of $\sqrt{n}$ speed of convergence with underlying linear truth. 
\section{Preliminaries}

We start with a collection of random variables in the potential outcomes framework:
$$\{(V^{0} ,V^{1} ,Z ), \quad V \in \{\tilde{T},C,X\}\} $$ 

\begin{itemize}
    \item $\tilde{T}^{0} ,\tilde{T}^{1}  \in (0, + \infty)$ are potential outcomes of survival times of interest.
    \item $C^{0} ,C^{1}   \in (0, + \infty) $ are potential outcomes of censoring times. 
   
    \item $X^0 , X^1  \in \mathbb{R}^p$ are individual vectors of covariates, $p \geq 1$.
     \item $Z  \in \{0,1\}$ are individual treatment assignment indicators.

\end{itemize}

We do not place a tilde over the potential outcomes of censoring times for the sake of simplicity; it is just not needed. 
$F_V$ denotes the distribution function of each random variable $V$. We use standard notation to denote joint and conditional distributions. Next, we define the \textit{realized} survival and censoring times respectively as 
$$T=(1-Z)\tilde{T}^{0}+ Z \tilde{T}^{1}, \quad C=(1-Z){C}^{0}+ Z {C}^{1} $$

The \textit{observed} response is therefore

$$T^*:=\min\{T,C\}$$

We define $T^0$ and $C^0$ as random variables distributed according to $F_{T^0}:=F_{T| Z=0}=F_{\tilde{T}^0| Z=0}$. This function is mathematically relevant because the conditional distribution of times coincides with the conditional distribution of counterfactual times.

The observed covariates are

 $$X=(1-Z)X^{0}+ Z X^{1}$$

 with event indicator 

$$\Delta=(1-Z)1(\tilde{T}^{0}\leq C^0)+Z 1(\tilde{T}^{1}\leq C^1)$$ 
It is worth noting that \begin{align*} (1-Z)\min\{\tilde{T}^0,C^0\}+ Z \min\{\tilde{T}^1,C^1\}=  \min\{(1-Z)\tilde{T}^0 + Z\tilde{T}^1,(1-Z)C^0 + ZC^1\} =\min\{T,C\} \end{align*}

what could be interpreted as commutativity between censoring and realizing.

In practice, we observe an i.i.d sample
$$\{(T^{*}_i,\Delta_i,Z_i,X_i)\}_{i=1}^n  \sim (T^{*},\Delta,Z,X) $$  

which are draws containing incomplete information about the original random variables.

$S=1-F$ denotes survival function in all cases.




\section{Population elements}\label{sec:pop}

\subsection{Counterfactual survival functions}
A key consideration for understanding counterfactual inference is that
$$S_{\tilde{T}^1 \mid Z=1}=S_{Z\tilde{T}^1 + (1-Z)\tilde{T}^1 \mid Z=1}=S_{{T} \mid Z=1}=:S_{{T}^1 }$$

but 

$$S_{\tilde{T}^1 \mid Z=1}\neq S_{\tilde{T}^1}$$

because $\tilde{T}^0$ and $\tilde{T}^1$ may be dependent of $Z$. To guarantee the identifiability of causal
effects from observational data we have to respect the assumption that  the potential outcomes are dependent of the treatment only via the covariates; i.e., there is no hidden confounding. This hypothesis is known as \textit{unconfoundedness} or \textit{ignorability}, which is a common hypothesis in observational studies. We can express it asserting that the joint distribution satisfies the global Markov property with respect to the following undirected graph:

\begin{center}
\begin{tikzpicture}[auto,node distance =1 cm and 1 cm,semithick,
 every loop/.style={},
    state/.style ={circle, draw, minimum width = 0.7 cm},
    point/.style = {circle, draw, inner sep=0.04cm,fill,node contents={}},
    bidirected/.style={Latex-Latex,dashed},
    el/.style = {inner sep=2pt, align=left, sloped}]
 \node[state] (1) {$C^0$};
    \node[state] (2) [right =of 1] {$C^1$};
    \node[state] (3) [right =of 2] {$\tilde T^0$};
        \node[state] (4) [right =of 3] {$\tilde T^1$};

    \node[state] (5) [below =of 4,xshift=-0.0cm,yshift=-0.0cm] {$X$};
            \node[state] (6) [right =of 5] {$Z$};

 \path (1) edge node[above] {} (2);
    \path (3) edge node[above] {} (4);
        \path (1) edge node[above] {} (5);
                \path (2) edge node[above] {} (5);
        \path (3) edge node[above] {} (5);
            \path (4) edge node[above] {} (5);
            \path (5) edge node[above] {} (6);
\end{tikzpicture}

\end{center}

and we term it throughout this paper \textit{conditional exogeneity assumption}, that can be formally expressed as $\tilde{T}^0, \tilde{T}^1 \indep Z \mid X $ and $C^0, C^1 \indep Z \mid X $

Survival functions of potential outcome times conditional on the treatment indicator are of interest because of their involvement in an expression that aims to break down the difference between both \textit{realized} distribution functions for $Z=0,1$. This decomposition serves as one motivation for the foundational work of \citet{cherno13} on counterfactual distributions. The decoupling is the following:

\begin{align}\begin{split}\label{decomp}&S_{{T}^1}(t)-S_{{T}^0}(t)=F_{{T}^0}(t)-F_{{T}^1}(t)=F_{\tilde{T}^0 \mid Z=0}(t)-F_{\tilde{T}^1\mid Z=1}(t)=\\ &\underbrace{F_{\tilde{T}^0 \mid Z=0}(t)-F_{\tilde{T}^0\mid Z=1}(t)}_{(A)}+\underbrace{F_{\tilde{T}^0 \mid Z=1}(t)-F_{\tilde{T}^1\mid Z=1}(t)}_{(B)}\end{split}\end{align}

The equation comprises two terms, (A) and (B), which represent the distributional effect of covariate distributions and the distributional treatment effect on the treated, respectively. The difference between the realized outcome distributions can be attributed to either or both of these terms, and their estimation is valuable in understanding the origin of the difference in observed outcome distributions. If (A) is determined to be small with respect to (B), then the difference between the observed outcome distributions is caused by the distributional difference on the treated (B). Conversely, in the reciprocal configuration, the difference between the observed outcome distributions is due to (A), which is caused by distributional differences between the covariates in each group and not by the effects of the treatment. In the econometrics jargon, (A) quantifies a composition effect due to differences in characteristics and (B) stands for differences in the response structure \citep{cherno13}. 

It is important to note again that $S_{{T}^1}(t)-S_{{T}^0}(t) \neq S_{\tilde{T}^1}(t)-S_{\tilde{T}^0}(t)$. The latter accounts for the effects of treatments 0 and 1, but our approach delves into what is driving the first one to be different. Observed outcome distributions are biased approximations to potential outcome distributions if treatment assignment is not randomized (i.e., if $X$ and $Z$ are not independent).

We now see how these distributions related to potential
outcomes that appear in distributional causal effects are related to the notion of counterfactual distribution, that we define below. In the following, ${F}_{T^0 \mid X^0=x}(\cdot)$ and ${F}_{T^1 \mid X^1=x}(\cdot)$ are the conditional distribution functions that describe the stochastic assignment of survival times to people with characteristics $x$ conditional on $Z=0$ and $Z=1$ respectively. We indistinctly use the relation $S=1-F$.

\begin{definition}[Counterfactual distributions, \cite{cherno13}] 
Whenever support$(F_{X^1})$ $\subseteq$ support$(F_{X^0})$,
$${F}_{T\langle 0 \mid 1\rangle}(\cdot):=\int {F}_{T^0 \mid X^0=x}(\cdot) \mathrm{d} {F}_{X^1}(x)$$
\end{definition}

\begin{lemma}[\cite{cherno13,cme}]
In general, ${S}_{T\langle 0 \mid 0\rangle}={S}_{\tilde{T}^0 \mid Z=0} \text { and } {S}_{T\langle 1 \mid 1\rangle}={S}_{\tilde{T}^1 \mid Z=1} $. Moreover, if conditional exogeneity holds and support$(S_{X^1})=$ support $(S_{X^0})$ then we also have ${S}_{T\langle 0 \mid 1\rangle}={S}_{\tilde{T}^0 \mid Z=1} \text { and } {S}_{T\langle 1 \mid 0\rangle}={S}_{\tilde{T}^1 \mid Z=0} $
\end{lemma}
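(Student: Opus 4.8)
The plan is to reduce both assertions to the law of total probability applied to the defining integral of the counterfactual distribution, invoking the conditional exogeneity assumption only to rewrite an outcome law conditioned on $Z=0$ as one conditioned on $Z=1$ (and vice versa).

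First, for the two ``diagonal'' identities I would start from the definition $F_{T\langle 0\mid 0\rangle}(t)=\int F_{T^0\mid X^0=x}(t)\,\mathrm{d}F_{X^0}(x)$, note that by construction $F_{T^0\mid X^0=x}$ is the regular conditional law of $\tilde T^0$ given $\{Z=0,X=x\}$ and $F_{X^0}=F_{X\mid Z=0}$, and then recognise the integral as the marginalisation over $X$ of the joint law of $(\tilde T^0,X)$ given $Z=0$, i.e.\ exactly $F_{\tilde T^0\mid Z=0}=F_{T^0}$. Passing to complements via $S=1-F$ yields $S_{T\langle 0\mid 0\rangle}=S_{\tilde T^0\mid Z=0}$, and the $(1\mid 1)$ case is identical with the treatment arms interchanged. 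No hypothesis beyond the support condition already present in the definition is used here.

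Second, for the cross identities I would again start from $F_{T\langle 0\mid 1\rangle}(t)=\int F_{T^0\mid X^0=x}(t)\,\mathrm{d}F_{X^1}(x)$ and rewrite $F_{T^0\mid X^0=x}$ as the conditional law of $\tilde T^0$ given $\{Z=0,X=x\}$. The key step is to invoke conditional exogeneity $\tilde T^0\indep Z\mid X$, which gives equality of this law with that of $\tilde T^0$ given $X=x$, hence with that of $\tilde T^0$ given $\{Z=1,X=x\}$, for $F_X$-almost every $x$; the equality of the supports of $F_{X^0}$ and $F_{X^1}$ is what guarantees that this almost-everywhere identity is valid throughout the region over which we integrate against $F_{X^1}=F_{X\mid Z=1}$. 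Substituting and applying the law of total probability once more collapses the integral to $F_{\tilde T^0\mid Z=1}(t)$, and taking complements gives $S_{T\langle 0\mid 1\rangle}=S_{\tilde T^0\mid Z=1}$; the symmetric computation, now using $\tilde T^1\indep Z\mid X$, gives $S_{T\langle 1\mid 0\rangle}=S_{\tilde T^1\mid Z=0}$ (and the support equality is also what ensures $F_{T\langle 1\mid 0\rangle}$ is well defined).

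I expect the only genuine point requiring care, rather than a real obstacle, to be the measure-theoretic bookkeeping: being explicit that $F_{X^j}$ is the law of $X$ conditional on $Z=j$, that $F_{T^0\mid X^0=x}$ and $F_{T^1\mid X^1=x}$ are the corresponding regular conditional distributions of the realized/counterfactual times, and that the support equality is precisely the device that upgrades the $\indep$-implied almost-sure equality of conditional laws to an equality usable under integration against the opposite arm's covariate law. Once this is set up, each identity is a two-line application of Fubini / total probability, so I would keep the write-up short and foreground the role of conditional exogeneity.
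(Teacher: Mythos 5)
Your argument is correct and coincides with the standard proof: the paper itself only cites Lemmas 3 and 4 of Muandet et al.\ (2021), whose proofs are exactly this tower-property computation combined with the use of conditional exogeneity to swap the conditioning on $Z=0$ for $Z=1$ inside the integrand. Your attention to the role of the support condition (upgrading the $F_X$-a.e.\ equality of conditional laws to one valid under integration against the opposite arm's covariate law) is precisely the point that makes the cross identities rigorous, so nothing is missing.
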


\begin{proof}
See Lemma 3 and 4 in \cite{cme}.
\end{proof}

If assumptions of Lemma 1 are fulfilled  

$$
(A)  =S_{\langle 0 \mid 1\rangle}(t)-S_{\langle 0 \mid 0\rangle}(t)\\
 =\int F_{T^0 \mid X^0}(t,x) d {F_{X^0}}(x)-\int F_{T^0 \mid X^0}(t,x) d {F_{X^1}} (x)$$

Becoming now clearly visible that (A) is due to a shift between the covariate distributions $F_{X^0}$ and $F_{X^1}$, as the only discrepancy between both integrals in the right hand side is originated by the measures. Meanwhile, as explained, (B) would quantify a treatment effect conditional to the intensive treatment arm.

\subsection{Kernel embeddings}

Let $l: (0,+\infty) \times (0,+\infty) \rightarrow \mathbb{R}$ be a symmetric positive semidefinite function (\textit {kernel}) and $\mathcal{H}$ its associated RKHS \citep{aronszajn1950theory}. We assume for the next couple of definitions that $T^0$ satisfies the integrability condition $\int_{0}^{\infty} \sqrt{l(t, t)} d F_T^0(t)<\infty$, support$(F_{X^1})$ $\subseteq$ support$(F_{X^0})$ and conditional exogeneity. We start with the following definition:

\begin{definition}[Conditional mean embedding, \cite{song2009hilbert}]
$$\mu_{T^0 \mid X^0=x}(\cdot):=\mathbb{E}_{T^0 |X^0}\left[l(T^0, \cdot) \mid X^0=x\right]=\int_0^\infty l(\cdot, t) \mathrm{ d} F_{T^0 \mid X^0=x}(t), \quad x\in \mathbb{R}^p$$
\end{definition}

See \cite{Muandet_2017} for an extensive survey on the interpretation, estimation and properties of kernel- conditional and mean- embeddings. We are now set to reach an important ingredient of our paper:

\begin{definition}[Counterfactual mean embedding, \cite{cme}]\label{deficount}
$$\mu_{T\langle 0 \mid 1\rangle}(\cdot)=\int_{\mathbb{R}^p} \mu_{T^0 \mid X^0={x}}(\cdot) \mathrm{d} F_{X^1}({x}) \in \mathcal{H}$$

\end{definition}

It is easy to see using the iterated expectations lemma and using conditional exogeneity that $\mu_{T\langle 0 \mid 1\rangle}(\cdot)=\int_{0}^{\infty} l(\cdot, t) \mathrm{ d}F_{\langle 0 \mid 1\rangle}(t) $. The previous definitions are reciprocally valid switching 0 by 1 and viceversa. 

\subsection{Interpretation of kernel mean embeddings as depth bands}

\subsubsection{Depth bands}
\begin{definition}
A statistical depth measure is a mapping $D: \mathcal{Y} \times \mathcal{P} \to [0,\infty)$, where $\mathcal{P}$ is the space of probability measures over $\mathcal{Y}$, that satisfies the following properties:

\begin{itemize}
\item Property P-1: Distance invariance of $D$.
\item Property P-2: Maximality of $D$ at the center.
\item Property P-3: Monotonicity of $D$ relative to the deepest point.
\item Property P-4: Upper semi-continuity of $D$ in any function $x \in \mathcal{D}$.
\item Property P-5: Receptivity of $D$ to the convex hull width across the domain.
\item Property P-6: Continuity of $D$ in $\mathcal{P}$.
\end{itemize}
\end{definition}

$h$-integrated depth band measures possess the desirable property of being affine invariant. We introduce the concept of an $h$-depth band functional for any $f\in \mathcal{Y}$, defined as follows:

\begin{equation}
D(f,P_{Y})= \int_{\mathcal{Y}} D_{\kappa_1}\left( \langle f,v \rangle;P_v\right) d \eta (v),
\end{equation}

Here, $D_{\kappa_1}: \mathbb{R}\times \mathcal{P}(\mathbb{R})\to [0,\infty)$ represents a one-dimensional $h$-depth measure using $k_1:[0,\infty)\to [0, \infty)$, $P_v \in \mathcal{P}(\mathbb{R})$ corresponds to the distribution of $\langle f, v \rangle$, where $f\sim P_{Y}$ and $v\in \mathcal{Y}$. The measure $\eta$ is defined on $\mathcal{Y}$ (identified with its dual using the Riesz representation theorem). Importantly, it should be noted that the $h$-depth band remains invariant under affine transformations.
	

		

	Now, we introduce the concept of $h$-bands:
	
	\begin{definition}
	Let $\mathcal{Y}$ be a vector space equipped with a norm $\norm{\cdot}$, $P_{Y}\in \mathcal{P}(\mathcal{Y})$ and $\kappa:[0,\infty)\to [0,\infty)$ be a continuous, non-increasing function with $\kappa(0)>0$ and $\lim_{t\to \infty} \kappa(t)=0$. The $h$-depth of $y \in \mathcal{Y}$ with respect to $P_{Y}$ is defined as 
	
\begin{equation}
D_{\kappa}(y; P_{Y})= \mathbb{E}[\kappa(\norm{y-Y})].
\end{equation} \label{def:bands}
	\end{definition}

\subsubsection{Kernel mean embeddings and integrated depth bands}

A natural connection arises \citep{wynne2021statistical} between $h$-depth and kernel mean embeddings generated by an invariant kernel.

\begin{theorem}
Let $\mathcal{Y}$ be a normed vector space and let $k(x,y)= \kappa(\norm{x-y})$ be a kernel on $\mathcal{Y}$ with $\kappa$ satisfying the conditions of Definition \ref{def:bands}. Then, $D_{\kappa}(y,P)= \phi_{k}P(y)$.
\end{theorem}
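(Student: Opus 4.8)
The plan is to unwind the definition of the kernel mean embedding and invoke the reproducing property of $\mathcal{H}$. Write $\phi_k$ for the map sending a probability measure $P\in\mathcal{P}(\mathcal{Y})$ to its kernel mean embedding $\phi_k P:=\int_{\mathcal{Y}} k(\cdot,x)\,dP(x)\in\mathcal{H}$, the integral being understood as a Bochner integral. The first step is to check that this is well defined. Because $k$ is translation invariant, $k(x,x)=\kappa(\|x-x\|)=\kappa(0)$ is a \emph{fixed finite constant} (finite since $\kappa(0)>0$ by the hypotheses of Definition~\ref{def:bands}), so
\[
\int_{\mathcal{Y}}\|k(\cdot,x)\|_{\mathcal{H}}\,dP(x)=\int_{\mathcal{Y}}\sqrt{k(x,x)}\,dP(x)=\sqrt{\kappa(0)}<\infty,
\]
and hence $\phi_k P$ is a well-defined element of $\mathcal{H}$.

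The second step is pointwise evaluation. For fixed $y\in\mathcal{Y}$, the evaluation functional $f\mapsto f(y)$ is bounded on $\mathcal{H}$ (this is the defining property of an RKHS) and is represented by $k(\cdot,y)$, i.e. $f(y)=\langle f,k(\cdot,y)\rangle_{\mathcal{H}}$. Since bounded linear functionals commute with Bochner integrals,
\[
(\phi_k P)(y)=\bigl\langle \phi_k P,\,k(\cdot,y)\bigr\rangle_{\mathcal{H}}=\int_{\mathcal{Y}}\bigl\langle k(\cdot,x),\,k(\cdot,y)\bigr\rangle_{\mathcal{H}}\,dP(x)=\int_{\mathcal{Y}}k(y,x)\,dP(x).
\]
Now substitute the assumed form of the kernel: with $Y\sim P$,
\[
(\phi_k P)(y)=\int_{\mathcal{Y}}\kappa(\|y-x\|)\,dP(x)=\mathbb{E}\bigl[\kappa(\|y-Y\|)\bigr]=D_{\kappa}(y;P),
\]
which is precisely Definition~\ref{def:bands}; the integral converges absolutely because $\kappa$ is non-increasing with $\kappa(0)<\infty$, hence bounded, so $0\le \kappa(\|y-Y\|)\le\kappa(0)$.

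\emph{Main obstacle.} There is no deep obstacle: the content of the theorem is simply the observation that the reproducing-property evaluation of the mean embedding of a translation-invariant kernel \emph{is} the $h$-depth integral. The only care needed is measure-theoretic bookkeeping — that the Bochner integral defining $\phi_k P$ exists and that evaluation at $y$ may be pulled inside it — and both are immediate once one notes that translation invariance forces $k(x,x)\equiv\kappa(0)$ to be constant and bounded. One should also keep in mind that the hypothesis already grants that $k(x,y)=\kappa(\|x-y\|)$ is a genuine positive-semidefinite kernel on $\mathcal{Y}$; this is not automatic for an arbitrary normed space and an arbitrary non-increasing $\kappa$, but since it is assumed we need not establish it.
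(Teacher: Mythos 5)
Your argument is correct. The paper itself states this theorem without proof, attributing it to \cite{wynne2021statistical}, and your derivation — well-definedness of the Bochner integral from $\|k(\cdot,x)\|_{\mathcal{H}}=\sqrt{k(x,x)}=\sqrt{\kappa(0)}$, commuting the bounded evaluation functional with the integral, and applying the reproducing property to get $(\phi_kP)(y)=\int \kappa(\|y-x\|)\,dP(x)=\mathbb{E}[\kappa(\|y-Y\|)]$ — is exactly the standard computation behind the cited result. Two cosmetic points: finiteness of $\kappa(0)$ follows from $\kappa$ being $[0,\infty)$-valued, not from $\kappa(0)>0$; and strictly speaking Bochner integrability also needs strong measurability of $x\mapsto k(\cdot,x)$, which here follows from continuity of $\kappa$ (since $\|k(\cdot,x)-k(\cdot,x')\|_{\mathcal{H}}^{2}=2\kappa(0)-2\kappa(\|x-x'\|)$) together with separability of the support of $P$.
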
\label{th:3}

It is natural to ask what conditions are needed on $\kappa$ to ensure that the corresponding kernel $k$ is indeed a kernel. The following theorem contains the key information.

\begin{theorem}
Let $\mathcal{Y}$ be a separable Hilbert space, $\kappa:[0,\infty)\to [0,\infty)$, and $k(x,y)= \kappa(\norm{x-y})$. Then the following are equivalent:
\begin{enumerate}
\item $k$ is a kernel.
\item There exists a finite Borel measure $\mu$ on $[0,\infty)$ such that $k(x,y)= \int_{0}^{\infty}e^{-t^{2}\norm{x-y}^{2}} d\mu(t)$.
\item $\kappa(\sqrt{\cdot})$ is completely monotone.
\end{enumerate}
\end{theorem}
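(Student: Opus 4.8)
The plan is to prove the cycle $(2)\Rightarrow(1)$, $(1)\Rightarrow(3)$, $(3)\Rightarrow(2)$, supplemented by the easy converse $(2)\Rightarrow(3)$, leaning on three classical results: Bochner's theorem (Gaussians are positive definite), the Hausdorff--Bernstein--Widder theorem on completely monotone functions, and Schoenberg's characterization of radial positive definite functions on Hilbert space.

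First I would dispatch $(2)\Rightarrow(1)$, which is routine. For each fixed $t\ge 0$ the Gaussian $k_t(x,y)=e^{-t^{2}\norm{x-y}^{2}}$ is a kernel on $\mathcal{Y}$: any finite collection $x_1,\dots,x_m$ spans a finite-dimensional subspace isometric to some $\mathbb{R}^d$, on which $k_t$ is positive semidefinite by Bochner's theorem (the Gaussian is its own nonnegative Fourier transform). Hence for scalars $c_1,\dots,c_m$ one has $\sum_{i,j}c_ic_j k(x_i,x_j)=\int_{0}^{\infty}\bigl(\sum_{i,j}c_ic_j k_t(x_i,x_j)\bigr)\,d\mu(t)\ge 0$, the integrand being nonnegative and $\mu$ a positive measure; symmetry and measurability are immediate, so $k$ is a kernel.

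Next I would treat $(2)\Leftrightarrow(3)$ via Hausdorff--Bernstein--Widder. Write $g(s):=\kappa(\sqrt{s})$ for $s\ge 0$, so $k(x,y)=g(\norm{x-y}^{2})$. If $(3)$ holds, $g$ is completely monotone on $[0,\infty)$ with $g(0)=\kappa(0)<\infty$, hence $g(s)=\int_{0}^{\infty}e^{-su}\,d\nu(u)$ for a positive Borel measure $\nu$ on $[0,\infty)$, which is \emph{finite} because $\nu([0,\infty))=\lim_{s\to 0^{+}}g(s)=\kappa(0)<\infty$; pushing $\nu$ forward under $u\mapsto\sqrt{u}$ yields a finite $\mu$ with $k(x,y)=\int_{0}^{\infty}e^{-t^{2}\norm{x-y}^{2}}\,d\mu(t)$, which is $(2)$. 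Conversely, if $(2)$ holds then $g(s)=\int_{0}^{\infty}e^{-t^{2}s}\,d\mu(t)$ is the Laplace transform of the finite pushforward of $\mu$ under $t\mapsto t^{2}$; differentiating under the integral sign gives $(-1)^{n}g^{(n)}(s)=\int_{0}^{\infty}u^{n}e^{-su}\,d\nu(u)\ge 0$, so $g=\kappa(\sqrt{\cdot})$ is completely monotone, i.e. $(3)$.

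The substantive step, which I expect to be the main obstacle, is $(1)\Rightarrow(3)$: this is Schoenberg's theorem on positive definite functions of a Hilbert-space norm, and unlike the other implications it genuinely uses positive definiteness at \emph{every} finite dimension. Since $\mathcal{Y}$ is a separable (infinite-dimensional) Hilbert space it contains isometric copies of $\mathbb{R}^{n}$ for all $n$, so $r\mapsto\kappa(r)$ is a radial positive definite function on $\mathbb{R}^{n}$ simultaneously for every $n$. Schoenberg's characterization of such functions --- obtained by writing the $\mathbb{R}^{n}$-radial positive definite functions as scale mixtures of the Bessel-type functions $\Omega_{n}$ and passing to the limit $\Omega_{n}(r\sqrt{n})\to e^{-r^{2}/2}$ --- forces $\kappa$ to be a Gaussian scale mixture, equivalently $\kappa(\sqrt{\cdot})$ completely monotone, and incidentally supplies the continuity of $\kappa$ that makes $(3)$ meaningful. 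Rather than reproving this, I would invoke Schoenberg's $1938$ result (see e.g. the treatment in Berg--Christensen--Ressel). Chaining $(2)\Rightarrow(1)$, $(1)\Rightarrow(3)$ and $(3)\Rightarrow(2)$ then closes the equivalence.
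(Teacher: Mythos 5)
The paper states this theorem without proof: it is Schoenberg's classical characterization of radial positive definite functions on Hilbert space, quoted from the depth/kernel-embedding literature the paper cites (Wynne and Nagy, 2021), so there is no in-paper argument for your proposal to be compared against. Your outline is the standard and correct one: $(2)\Rightarrow(1)$ is immediate from Bochner's theorem plus positivity of the mixing measure, $(2)\Leftrightarrow(3)$ is Hausdorff--Bernstein--Widder applied to $g(s)=\kappa(\sqrt{s})$ (with finiteness of $\mu$ read off from $g(0)=\kappa(0)<\infty$), and you rightly identify $(1)\Rightarrow(3)$ as the only substantive step and delegate it to Schoenberg (1938), which is exactly what the cited sources do. Two caveats are worth recording if this were written out in full: the implication $(1)\Rightarrow(3)$ genuinely requires $\mathcal{Y}$ to be infinite-dimensional (you flag this; the statement leaves it implicit), and the integral representation in $(2)$ forces $\kappa$ to be continuous at $0$ by dominated convergence, so ``completely monotone'' in $(3)$ must be understood to include right-continuity at the origin --- otherwise a kernel such as $e^{-\|x-y\|^{2}}+\mathbf{1}\{x=y\}$ satisfies $(1)$ but not $(2)$.
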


\section{Empirical estimates of causal survival embeddings}

For generality purposes, we denote by $\mathcal{X}$ the covariate space and by $\mathcal{T}$ the target space. Anyway, in our real application case we will use $\mathcal{X}=\mathbb{R}^9$ and $\mathcal{T}=(0,+\infty)$. We first discuss how to estimate
$\mu_{{T^0 \mid X^0}}=\mathbb{E}_{T^0 \mid X^0}\left[l(T^0, \cdot) \mid X^0\right]: \mathcal{X} \longrightarrow \mathcal{H}$ \citep{measure}  because upon obtaining $\hat{\mu}_{T^0 \mid X^0=x}$ (simply done by isolating the data from control group), estimating counterfactual mean embeddings is reduced to taking averages with respect to the covariates in the treatment group: $\hat{\mu}_{T\langle 0 \mid 1\rangle}:=\frac{1}{m} \sum_{j=1}^m \hat{\mu}_{T^0 \mid X^0={X^1_j}}$, as suggested by Definition \ref{deficount}. We insist in the fact that for estimation of the \textit{conditional} mean embedding we use data in the \textit{control} group. In this section we focus on $T^0$ and $X^0$ but, again, the same theory holds replacing $0$ by $1$ without loss of generality when it comes to estimate $\mu_{T\langle 1 \mid 0\rangle}$. We start by noticing that the map $x \mapsto {\mu}_{T^0 \mid X^0=x}$ takes values in the Hilbert space $\mathcal{H}$. This motivates the following definition:

\begin{definition}[Vector-valued RKHS, \cite{carmeli2006vector}]
An $\mathcal{H}$-valued RKHS on $\mathcal{X}$ is a Hilbert space $\mathcal{F}$ such that 1) the elements of $\mathcal{F}$ are functions $\mathcal{X} \rightarrow \mathcal{H}$; 2) for all $x \in \mathcal{X}, \exists C_x>0$ such that $\|F(x)\|_{\mathcal{H}} \leq C_x\|F\|_{\mathcal{F}} \textrm{ for all } F \in \mathcal{F}$.
\end{definition}

In the traditional framework of RKHSs formed by real valued functions, a very useful aspect is that it is possible to evaluate functions belonging to the space by making inner products times very special elements therein: the collection $\{k(\cdot, x): x \in \mathcal{X}\}$, in virtue of Riesz's Representation Theorem. $k$ is the so-called \textit{kernel} function uniquely determining $\mathcal{H}$. Looking for a surrogate of the notion of kernel in $\mathcal{H}$-valued RKHSs we arrive to the following definition. We call $\mathcal{L}(\mathcal{H})$ the space of bounded linear operators from $\mathcal{H}$ to $\mathcal{H}$.

\begin{definition}[$\mathcal{H}$-kernel, \cite{carmeli2006vector}]
 
A $\mathcal{H}$-kernel of positive type on $\mathcal{X} \times \mathcal{X}$ is a map $\Gamma: \mathcal{X} \times \mathcal{X} \rightarrow \mathcal{L}(\mathcal{H})$ such that $\forall N \in \mathbb{N}, \forall x_1, \ldots, x_N \in \mathcal{X}$ and $\forall c_1, \ldots, c_N \in \mathbb{R}, \sum_{i, j=1}^N c_i c_j\left\langle\Gamma\left(x_j, x_i\right) (h), h\right\rangle_{\mathcal{H}} \geq 0 \quad \forall h \in \mathcal{H}$.

\end{definition}
If $\Gamma$ is an $\mathcal{H}$-kernel in the sense of the previous definition, there exists a unique (up to isometry) RKHS, with $\Gamma$ as its reproducing kernel \citep{pontil}, satisfying: 1) for any $x, x^{\prime} \in \mathcal{X}, h, h^{\prime} \in \mathcal{H}$ and $F \in \mathcal{F}$, $\langle F(x), h\rangle_{\mathcal{H}}=\langle F, \Gamma(\cdot, x) (h)\rangle_{\mathcal{F}}$ and 2) $\left\langle h, \Gamma\left(x, x^{\prime}\right)\left(h^{\prime}\right)\right\rangle_{\mathcal{H}}=\left\langle\Gamma(\cdot,x)(h), \Gamma\left(\cdot, x^{\prime}\right)\left(h^{\prime}\right)\right\rangle_{\mathcal{F}}$

Now we can pose the estimation of conditional mean embeddings as risk minimization of the theoretical loss \citep{grune}:  $$\tilde R(F)=\mathbb{E}_{X^0}\left[\left\|\mu_{{T^0 \mid X^0}}(X^0)-F(X^0)\right\|_{\mathcal{H}}^2\right], \quad F \in \mathcal{F}$$ 

where $\mathcal{F}$ is a vector-valued RKHS of functions $\mathcal{X} \rightarrow \mathcal{H}$. For simplicity, we endow $\mathcal{F}$ with a kernel $\Gamma\left(x, x^{\prime}\right)=k\left(x, x^{\prime}\right)$ Id, where $k$ is a scalar kernel on $\mathcal{X}$ and Id: $ \mathcal{H} \rightarrow \mathcal{H}$ is the identity map on $\mathcal{H}$. We have in virtue of generalised conditional Jensen’s inequality \citep{perlman1974jensen} and iterated expectations lemma:

$\begin{aligned}
\tilde R(F) & =\mathbb{E}_{X^0}\left[\left\|\mathbb{E}_{T^0 \mid X^0}\left[l(T^0, \cdot)-F(X^0) \mid X^0\right]\right\|_{\mathcal{H}}^2\right] \leq \mathbb{E}_{X^0} \mathbb{E}_{T^0 \mid X^0}\left[\left\|l(T^0, \cdot)-F(X^0)\right\|_{\mathcal{H}}^2 \mid X^0\right] \\
& =\mathbb{E}_{T^0X^0}\left[\left\|l(T^0, \cdot)-F(X^0)\right\|_{\mathcal{H}}^2\right]=: R(F)
\end{aligned}$

$R(F)$ acts as a surrogate theoretical risk that admits an empirical version under right-censoring.

Now the problem is that we do not have access to a sample from the joint distribution of $(T_0,X_0)$ that would allow us to estimate the expectation involved by $R(F)$ because of censoring: we instead observe data from $\operatorname{min}\{T^0,C^0\}$. Let us further develop the measure with respect to which the expectation in $R(F)$ is taken:
\begingroup
\addtolength{\jot}{2em}
\begin{align*}
 dF_{T^0X^0}(t,x)& =P(T^0 \in dt,X^0 \in dx )= P(T \in dt,X \in dx | Z = 0 ) = \\
 &=\frac{ P(T \in dt,X \in dx | Z = 0 ) P(Z = 0 )}{P(Z = 0 )} = \\
&=  \frac{ P(T \in dt,X \in dx , Z = 0 ) P(\Delta=1 | T =t,X =x , Z = 0 ) }{P(Z = 0 ) P(\Delta=1 | T =t,X=x , Z = 0 )} =\\ 
&= \frac{  P(\Delta=1 , T \in dt,X \in dx , Z = 0 ) }{P(Z = 0 )P(\Delta=1 | T =t,X =x , Z = 0 )} = \frac{  P(\Delta=1 ,T \in dt,X \in dx  | Z = 0 ) }{P(\Delta=1 |  T =t,X =x , Z = 0 )}= \\
&=\frac{ dF_{0}^{(*)}(t,x)  }{G_0(t,x)}
\end{align*}

where $G_0(t,x)=P(\Delta=1| T=t,X=x, Z = 0)$ is the conditional probability that an observation is uncensored given that the event time is $t$ and the covariates are $x$ in the control population \\ and $F_0^{(*)}(t,x) = P( \Delta =1,T \leq t,X \leq x|Z = 0)$ is the law of uncensored observations in the control population \citep*{stute1996distributional,gerds}. 

Note that if we assume 
 $C \indep T | Z $ and $\Delta \indep X | T,Z$ then 

 $G_0(t,x)=P(\Delta=1| T=t,X=x, Z = 0)=P(\Delta=1| T=t, Z = 0)=P(C>t| Z = 0)$ 

 and therefore $G_0(t,x)=G_0(t)$ equals $1-$ the marginal law of censoring times conditional to $Z=0$.

Let $\left(X_1, T_1^{*}\right), \ldots,\left(X_n, T_n^{*}\right)$ be i.i.d. observations from the control group $Z=0$. By plugging in an estimate $\hat G_0(t,x)$ and the empirical measure   $$d\hat F_0^{(*)}(t,x)=\frac{1}{n} \sum_{i=1}^{n}\Delta_i \delta_{T^*_i}(t)\delta_{X_i}(x)$$ we arrive to a regularized empirical risk minimization problem:

$$\hat{R}_ {\varepsilon,n}(F):=\frac{1}{n}\sum_{i=1}^n\frac{\Delta_i}{\widehat G_0 (T^*_i,X_i)}\left\|l\left(T^*_i, \cdot\right)-F\left(X_i\right)\right\|_{\mathcal{H}}^2+\varepsilon\|F\|_\mathcal{F}^2$$

$$W_i:=\frac{\Delta_i}{\widehat G_0 (T^*_i,X_i)}$$

We denote its minimizer by $\hat{\mu}_{\varepsilon, n}$,
$$
\hat{\mu}_{\varepsilon, n}:=\underset{F \in \mathcal{F}}{\operatorname{argmin}} \phantom{s}\widehat{R}_{\varepsilon, n}(F) .
$$
This is the final estimator for the {conditional} mean embedding. 


\begin{lemma}\label{l2}
A minimizer of the empirical risk $\hat{R}_ {\varepsilon}(F)$ is unique and can be expressed as $\sum_{j=1}^n \Gamma\left(\cdot, x_i\right)\left(c_i\right)$ where the coefficients $\left\{c_j: j=1, \ldots, n\right\} \subseteq \mathcal{H}$ are the unique solution of the linear equations $\sum_{j=1}^n\left(W_i\Gamma\left(z_i, z_j\right)+n \varepsilon \delta_{i j}\right)\left(c_j\right)=W_ih_i, i=1, \ldots, n$.
\end{lemma}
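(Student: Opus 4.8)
The statement is a vector-valued representer theorem for the censoring-weighted ridge functional $\widehat R_{\varepsilon,n}$, so I would run the classical template (cf.\ \cite{pontil}) adapted to the weights $W_i=\Delta_i/\widehat G_0(T^*_i,X_i)\ge0$; throughout write $h_i:=l(T^*_i,\cdot)\in\mathcal H$, so that $z_i=X_i$ and $h_i$ are the quantities appearing in the statement. The argument splits into three steps. \emph{Step 1 (a unique minimiser exists).} I would first note that each evaluation $F\mapsto F(X_i)$ is a bounded linear map $\mathcal F\to\mathcal H$ by part~2) of the definition of a vector-valued RKHS, so $F\mapsto\frac1n\sum_iW_i\|h_i-F(X_i)\|_{\mathcal H}^2$ is continuous and convex (the $W_i$ being nonnegative), while $\varepsilon\|\cdot\|_{\mathcal F}^2$ is continuous, strictly convex and coercive. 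Hence $\widehat R_{\varepsilon,n}$ is strictly convex, continuous and coercive on the Hilbert space $\mathcal F$, so it has a unique minimiser $\hat\mu_{\varepsilon,n}$.

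\emph{Step 2 (the minimiser lies in the data subspace).} Set $\mathcal F_0:=\{\sum_{j=1}^n\Gamma(\cdot,X_j)(c_j):c_1,\dots,c_n\in\mathcal H\}$. Because the paper takes $\Gamma(x,x')=k(x,x')\,\mathrm{Id}$, one has $\mathcal F_0=V\otimes\mathcal H$ with $V:=\operatorname{span}\{k(\cdot,X_1),\dots,k(\cdot,X_n)\}$ finite-dimensional, so $\mathcal F_0$ is a closed subspace of $\mathcal F$; let $\Pi$ be the orthogonal projection onto it. Using property~1) of the $\mathcal H$-kernel, for every $F\in\mathcal F$ and every $h\in\mathcal H$ one has $\langle((I-\Pi)F)(X_i),h\rangle_{\mathcal H}=\langle(I-\Pi)F,\Gamma(\cdot,X_i)(h)\rangle_{\mathcal F}=0$ since $\Gamma(\cdot,X_i)(h)\in\mathcal F_0$; hence $(I-\Pi)F$ vanishes at every $X_i$, so the data-fit term of $F$ equals that of $\Pi F$, while $\|F\|_{\mathcal F}^2=\|\Pi F\|_{\mathcal F}^2+\|(I-\Pi)F\|_{\mathcal F}^2\ge\|\Pi F\|_{\mathcal F}^2$ with equality iff $F\in\mathcal F_0$. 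Therefore $\hat\mu_{\varepsilon,n}\in\mathcal F_0$, i.e.\ $\hat\mu_{\varepsilon,n}=\sum_{j=1}^n\Gamma(\cdot,X_j)(c_j)$ for some $c_j\in\mathcal H$.

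\emph{Step 3 (the coefficient system and its unique solvability).} Substituting $F=\sum_j\Gamma(\cdot,X_j)(c_j)$ and using properties~1)--2) gives $F(X_i)=\sum_j\Gamma(X_i,X_j)(c_j)$ and $\|F\|_{\mathcal F}^2=\sum_{i,j}\langle c_i,\Gamma(X_i,X_j)(c_j)\rangle_{\mathcal H}$, turning $\widehat R_{\varepsilon,n}$ into a strictly convex quadratic in $c=(c_1,\dots,c_n)\in\mathcal H^n$. With $\mathbf\Gamma:=(\Gamma(X_i,X_j))_{i,j}$ (self-adjoint on $\mathcal H^n$ since $\Gamma(X_i,X_j)^*=\Gamma(X_j,X_i)$, and $\succeq0$ by positivity of the $\mathcal H$-kernel), $\mathbf W:=\operatorname{diag}(W_1,\dots,W_n)$ block-diagonal, and $\mathbf h:=(h_1,\dots,h_n)$, the vanishing-gradient equation reduces to $\mathbf\Gamma\big[(\mathbf W\mathbf\Gamma+n\varepsilon I)c-\mathbf W\mathbf h\big]=0$. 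I would then check that $\mathbf W\mathbf\Gamma+n\varepsilon I$ is invertible: any block with $W_i=0$ forces $c_i=0$, and on the remaining blocks the operator is similar via $\mathbf W^{1/2}$ to $\mathbf W^{1/2}\mathbf\Gamma\mathbf W^{1/2}+n\varepsilon I\succeq n\varepsilon I\succ0$. Consequently $c^\star:=(\mathbf W\mathbf\Gamma+n\varepsilon I)^{-1}\mathbf W\mathbf h$ is the unique solution of $\sum_{j=1}^n\big(W_i\Gamma(X_i,X_j)+n\varepsilon\delta_{ij}\big)(c_j)=W_ih_i$; it clearly annihilates the bracket above, so $F^\star:=\sum_j\Gamma(\cdot,X_j)(c^\star_j)$ is a stationary point of the strictly convex functional and hence equals $\hat\mu_{\varepsilon,n}$, which exhibits the minimiser in the asserted form with the asserted (unique) coefficients.

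\emph{Where the difficulty lies.} Step~1 and the algebra of Step~3 are routine; the two points that need care are (i) the closedness of $\mathcal F_0$ together with the identity $((I-\Pi)F)(X_i)=0$ in the operator-valued RKHS setting, and (ii) that the coefficients are pinned down by the \emph{full} system $(\mathbf W\mathbf\Gamma+n\varepsilon I)c=\mathbf W\mathbf h$ rather than by the weaker stationarity condition $\mathbf\Gamma[(\mathbf W\mathbf\Gamma+n\varepsilon I)c-\mathbf W\mathbf h]=0$ produced by differentiation — which is precisely where the strictly positive ridge term $n\varepsilon I$ and the nonnegativity of the $W_i$ enter.
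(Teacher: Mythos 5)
Your proposal is correct, but it takes a genuinely different route from the paper. The paper's proof is a direct variational calculation: it expands $\hat{R}_{\varepsilon,n}(\hat F+G)$ for an arbitrary perturbation $G\in\mathcal F$, extracts the first-order condition $\frac1n\sum_i W_i\langle h_i-\hat F(X_i),G(X_i)\rangle_{\mathcal H}=\varepsilon\langle\hat F,G\rangle_{\mathcal F}$, and then simply \emph{tries} the ansatz $\hat F=\sum_j\Gamma(\cdot,X_j)(c_j)$, verifying that the stated linear system is \emph{sufficient} for this condition to hold for every $G$; it does not establish existence or uniqueness of the minimizer, does not argue that the minimizer must lie in the data span, and does not show that the linear system has a unique solution, even though all of these appear in the lemma statement. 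Your three steps supply precisely those pieces: Step 1 (strict convexity, continuity, coercivity) gives a unique minimizer; Step 2 is the classical orthogonal-projection representer argument, using the reproducing property and the closedness of $\mathcal F_0$ (valid here since $\Gamma=k\,\mathrm{Id}$ makes $\mathcal F_0$ a finite-dimensional-times-$\mathcal H$ subspace) to force the minimizer into the span; and Step 3 pins down the coefficients by showing $\mathbf W\boldsymbol\Gamma+n\varepsilon I$ is invertible, handling the $W_i=0$ blocks separately — a point the paper never addresses. So your argument is more complete than the paper's and actually justifies the "unique" claims; what the paper's shorter route buys is brevity, since sufficiency of the system plus convexity already identifies a minimizer of the asserted form. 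One minor imprecision on your side: the objective viewed as a quadratic in the coefficients $c$ need not be \emph{strictly} convex when the Gram operator $\boldsymbol\Gamma$ is singular; but you do not need that, because stationarity in $c$ plus convexity yields the minimum over $\mathcal F_0$, and uniqueness already lives at the level of $F\in\mathcal F$ from Step 1 (equivalently, a solution of the full system satisfies the paper's first-order condition for all $G\in\mathcal F$), so your conclusion stands as written.
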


\begin{proof}
See Appendix.
\end{proof}

Choosing $\Gamma\left(x, x^{\prime}\right)=k\left(x, x^{\prime}\right) \mathrm{Id}$ (see \cite{grune} for more details on why this is a sensible election) we conclude
$$W H=(WK+ n\varepsilon I ) C \Longleftrightarrow  C=(WK + n\varepsilon I ) ^{-1}WH$$

where ${K}_{ij}=k(X_i,X_j)$ $W=\textrm{diag}(W_1, \ldots, W_n)$, $H=(h_1\ldots h_n)'$, $C=(c_1\ldots c_n)'$. 

Now the \textit{conditional} mean embedding evaluated on the covariates of the treated sample $(X^1_1, \ldots,X^1_m )$ is $(\hat F(X^1_1) \ldots \hat F(X^1_m))=(\sum_{j=1}^nk(X^1_1 ,X_j) c_j\ldots\sum_{j=1}^nk(X^1_m ,X_j) c_j) = C'\Tilde{K}$

where $\Tilde{K}_{ij}=\Gamma(X_i,X^1_j)$

The \textit{counterfactual} mean embedding is computed by taking the average of the previous row: $\hat \mu_{T\langle 0 \mid 1\rangle}(\cdot) = C'\Tilde{K} 1_m $ where $1_m$ is a vector of all ones divided by $m$. 

By recovering the expression of $C$ previously derived we have a closed expression for the \textit{counterfactual} mean embedding estimator

$$\hat\mu_{T\langle 0 \mid 1\rangle}(\cdot)= ((WK + n\varepsilon I ) ^{-1}WH)'\Tilde{K} 1_m = H' W(KW + n\varepsilon I ) ^{-1}\Tilde{K} 1_m$$

and its row-shaped version (visually, resembles better to a function of time) is
$$\hat \mu'_{T\langle 0 \mid 1\rangle}(\cdot)= 1'_m \Tilde{K}'(WK + n\varepsilon I ) ^{-1}W H  $$

It is important to bear in mind that $H=(l(T^*_1,\cdot),\cdots,l(T^*_n,\cdot))'$.
We can always evaluate $H_{ij}=l(T^*_i,t_j)$ on a grid time-points $t_1,\ldots, t_N$. 

\section{Asymptotics of causal survival embeddings}
\subsection{Population and empirical covariance operators}
This section comprises the main theoretical contribution of our work. Let us get started by a couple of definitions needed to reexpress parameters and their estimators in a more convenient way regarding proofs. 
\begin{definition}[\cite{fukumizubayes}] Let $\mathcal{C}_{T X}: \mathcal{G} \rightarrow \mathcal{H}$ be the covariance operator of the random variables $X^0$ and $T^0$ defined as 
$$
\mathcal{C}_{T X} f=\int l(\cdot,  {t}) f( {x}) d F_{X^0 T^0}( {x}, {t})=\mathbb{E}_{X^0 T^0}\left[l\left(\cdot, T^0\right) f\left(X^0\right)\right], \quad f \in \mathcal{G}
$$
\end{definition}

substituting the measure $dF_{X^0 T^0} = \frac{ dF_{0}^{(*)}  }{G_0}$ by the empirical counterparts $\hat F_{0}^{(*)}$ and $\hat G_0$ we obtain
\begin{definition}[Adapted from \citet*{cme}]Let $\left(X_1, T_1^{*}\right), \ldots,\left(X_n, T_n^{*}\right)$ be i.i.d. observations from the control group $Z=0$. We define: 
$$\widehat{\mathcal{C}}^*_{X X} f:=\frac{1}{n} \sum_{i=1}^n W_i k\left(\cdot, X_i\right) f\left({X}_i\right), \quad \widehat{\mathcal{C}}^*_{TX} f=\frac{1}{n} \sum_{i=1}^n W_i l\left(\cdot, T^*_i\right) f\left(X_i\right), \quad f \in \mathcal{G}$$
\end{definition}
The following result shows that we can write $\hat{\mu}_{T\langle 0 \mid 1\rangle}$ using the empirical covariance operators. 

\begin{lemma}\label{l3}
Let $\hat{\mu}_{X_1}$ the kernel mean embedding estimated with the sample covariates from the treated population. Then we have
$$
\hat{\mu}_{T\langle 0 \mid 1\rangle}=\widehat{\mathcal{C}}^*_{TX}\left(\widehat{\mathcal{C}}^*_{X X}+\varepsilon I\right)^{-1} \hat{\mu}_{X_1} .
$$

\end{lemma}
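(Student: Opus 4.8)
The plan is to show that the closed-form expression for $\hat\mu_{T\langle 0\mid 1\rangle}$ derived at the end of Section 4 coincides with $\widehat{\mathcal{C}}^*_{TX}(\widehat{\mathcal{C}}^*_{XX}+\varepsilon I)^{-1}\hat\mu_{X_1}$. First I would identify the ingredients on both sides in a common language: the treated-sample mean embedding is $\hat\mu_{X_1}=\frac{1}{m}\sum_{j=1}^m k(\cdot,X^1_j)$, so $\tilde K 1_m$ (with $\tilde K_{ij}=k(X_i,X^1_j)$ and $1_m$ the all-ones vector scaled by $1/m$) is precisely the vector with $i$-th entry $\langle k(\cdot,X_i),\hat\mu_{X_1}\rangle_{\mathcal G}$. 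Likewise I would record that, as operators, $\widehat{\mathcal{C}}^*_{XX}=\frac1n\sum_i W_i\,k(\cdot,X_i)\otimes k(\cdot,X_i)$ and $\widehat{\mathcal{C}}^*_{TX}=\frac1n\sum_i W_i\,l(\cdot,T^*_i)\otimes k(\cdot,X_i)$ in the usual tensor/outer-product notation, i.e. $\widehat{\mathcal{C}}^*_{XX}f=\frac1n\sum_i W_i k(\cdot,X_i)\langle k(\cdot,X_i),f\rangle_{\mathcal G}$ and similarly for $\widehat{\mathcal{C}}^*_{TX}$.

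The key algebraic step is the standard ``kernel trick'' manipulation: because $\widehat{\mathcal{C}}^*_{XX}$ has finite-dimensional range contained in $\operatorname{span}\{k(\cdot,X_i)\}_{i=1}^n$, the resolvent $(\widehat{\mathcal{C}}^*_{XX}+\varepsilon I)^{-1}$ acting on anything, followed by $\widehat{\mathcal{C}}^*_{TX}$, can be pushed through the identity
$$\widehat{\mathcal{C}}^*_{TX}\bigl(\widehat{\mathcal{C}}^*_{XX}+\varepsilon I\bigr)^{-1}=\Bigl(\tfrac1n\textstyle\sum_i W_i\, l(\cdot,T^*_i)\otimes k(\cdot,X_i)\Bigr)\bigl(\widehat{\mathcal{C}}^*_{XX}+\varepsilon I\bigr)^{-1}.$$
Concretely I would write $g:=(\widehat{\mathcal{C}}^*_{XX}+\varepsilon I)^{-1}\hat\mu_{X_1}$, note that $\varepsilon g=\hat\mu_{X_1}-\widehat{\mathcal{C}}^*_{XX}g$ forces $g\in\operatorname{span}\{k(\cdot,X_j)\}$, write $g=\sum_j \beta_j k(\cdot,X_j)$, and translate the operator equation $(\widehat{\mathcal{C}}^*_{XX}+\varepsilon I)g=\hat\mu_{X_1}$ into the matrix equation $\frac1n (KW K +n\varepsilon K)\beta=\frac1n K \tilde K 1_m$ by taking inner products with each $k(\cdot,X_i)$; since this must hold and $K$ can be cancelled (or handled on $\operatorname{ran}K$), one gets $(WK+n\varepsilon I)\,(n\beta)=\tilde K1_m$, i.e. $n\beta=(WK+n\varepsilon I)^{-1}\tilde K1_m$. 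Applying $\widehat{\mathcal{C}}^*_{TX}$ to $g=\sum_j\beta_j k(\cdot,X_j)$ gives $\frac1n\sum_i W_i l(\cdot,T^*_i)\sum_j\beta_j k(X_i,X_j)=\sum_i W_i l(\cdot,T^*_i)\,(Kn\beta)_i/n$, which after substituting $n\beta$ rearranges exactly into $H'W(KW+n\varepsilon I)^{-1}\tilde K1_m$, matching the Section 4 formula; a transpose/commuting-resolvent identity $W(KW+n\varepsilon I)^{-1}=(WK+n\varepsilon I)^{-1}W$ reconciles the two cosmetic forms.

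The main obstacle is purely careful bookkeeping rather than a conceptual difficulty: keeping track of the factors of $1/n$ hidden in the empirical operators versus the $n\varepsilon$ in the matrix normal equations, making sure the cancellation of $K$ is legitimate (it is, because both sides lie in $\operatorname{ran}K$ and the relevant identity $\widehat{\mathcal{C}}^*_{TX}g$ only sees $g$ through $Kg$-type quantities, or alternatively by appealing to the resolvent identity to avoid inverting $K$ at all), and confirming that $\widehat{\mathcal{C}}^*_{XX}$ is genuinely a positive self-adjoint finite-rank operator so that $(\widehat{\mathcal{C}}^*_{XX}+\varepsilon I)^{-1}$ is well defined and bounded for $\varepsilon>0$ even when some weights $W_i$ vanish. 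Once these points are in place, the equality is immediate from Lemma \ref{l2} together with the explicit matrix formula already displayed, so I would in fact phrase the proof as: invoke Lemma \ref{l2} to get $\hat\mu_{T\langle 0\mid1\rangle}=H'W(KW+n\varepsilon I)^{-1}\tilde K1_m$, then verify by the span argument above that $\widehat{\mathcal{C}}^*_{TX}(\widehat{\mathcal{C}}^*_{XX}+\varepsilon I)^{-1}\hat\mu_{X_1}$ equals the same expression.
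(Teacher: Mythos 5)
Your overall strategy (reduce the operator identity to a finite linear system at the control sample points and match it with the Section~4 closed form) is the same as the paper's, but the pivotal step of your main argument fails as stated. You claim that $\varepsilon g=\hat\mu_{X_1}-\widehat{\mathcal{C}}^*_{XX}g$ forces $g:=(\widehat{\mathcal{C}}^*_{XX}+\varepsilon I)^{-1}\hat\mu_{X_1}$ to lie in $\operatorname{span}\{k(\cdot,X_j)\}_{j=1}^n$. That is false: $\widehat{\mathcal{C}}^*_{XX}g$ does lie in the span of the \emph{control} features, but $\hat\mu_{X_1}=\frac1m\sum_{j=1}^m k(\cdot,X^1_j)$ lies in the span of the \emph{treated} features, so $g$ generally has a component along $\{k(\cdot,X^1_j)\}_{j=1}^m$ and the ansatz $g=\sum_j\beta_j k(\cdot,X_j)$ is not available. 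Consequently the matrix equation you write down is not a faithful translation: its correct right-hand side is $\widetilde K 1_m$ (since $\langle k(\cdot,X_i),\hat\mu_{X_1}\rangle_{\mathcal G}=\hat\mu_{X_1}(X_i)=(\widetilde K1_m)_i$), not $\frac1n K\widetilde K1_m$, and with the correct right-hand side the system need not be solvable in $\beta$ at all (nor is ``cancelling $K$'' justified, since $\widetilde K1_m$ need not lie in $\operatorname{ran}K$). There is also a factor-of-$n$ slip: taking your own relations $n\beta=(WK+n\varepsilon I)^{-1}\widetilde K1_m$ and $\widehat{\mathcal{C}}^*_{TX}g=\frac1n\sum_i W_i\,l(\cdot,T^*_i)(K\beta)_i$ literally yields $\frac1n H'WK(WK+n\varepsilon I)^{-1}\widetilde K1_m$, which is not the target $H'W(KW+n\varepsilon I)^{-1}\widetilde K1_m$.

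The repair is exactly the fallback you mention in passing, and it is what the paper's proof does: no representer-type expansion is needed because $\widehat{\mathcal{C}}^*_{TX}g=\frac1n\sum_i W_i\,l(\cdot,T^*_i)\,g(X_i)$ depends on $g$ only through the evaluation vector $\boldsymbol{g}=(g(X_1),\ldots,g(X_n))^{\top}$. Evaluating $(\widehat{\mathcal{C}}^*_{XX}+\varepsilon I)g=\hat\mu_{X_1}$ at $X_1,\ldots,X_n$ gives $\frac1n(KW+n\varepsilon I)\boldsymbol{g}=\widetilde K1_m$, hence $\boldsymbol{g}=n(KW+n\varepsilon I)^{-1}\widetilde K1_m$, and substituting into $\widehat{\mathcal{C}}^*_{TX}g$ gives $\sum_i W_i\,l(\cdot,T^*_i)\bigl[(KW+n\varepsilon I)^{-1}\widetilde K1_m\bigr]_i=H'W(KW+n\varepsilon I)^{-1}\widetilde K1_m$, which is the Section~4 formula (use $W(KW+n\varepsilon I)^{-1}=(WK+n\varepsilon I)^{-1}W$ for the other cosmetic form). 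With that replacement your plan coincides with the paper's proof; as written, the span claim and the bookkeeping do not go through.
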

\begin{proof}
See Appendix. 
\end{proof}

\subsection{Assumptions}

In the following, we introduce the assumptions needed for establishing consistency of our proposed estimator. 
\begin{enumerate}
\item $\sup _{x \in \mathcal{X}} k(x, x)<\infty$ and $\sup _{t \in \mathcal{T}} \l(t, t)<\infty$

This assumption is satisfied by Gaussian kernels and helps in conjunction with the following general inequality for RKHSs. Let us suppose that $f \in \mathcal{G}$. Then for $x \in \mathcal{X}$

$$ f(x)=\left \langle k(\cdot,x), f \right \rangle_{\mathcal{H}} \leq  \left\| k(\cdot,x) \right\|_{\mathcal{H}} \left\|  f \right\|_{\mathcal{H}}   $$

in virtue of Cauchy-Schwartz inequality. Now noting that $\left\| k(\cdot,x) \right \|^2_{\mathcal{H}} = \left \langle k(\cdot,x), k(\cdot,x) \right \rangle_{\mathcal{H}} = k(x,x)$, we finally have
$$ f(x) \leq  \sqrt{ k(x,x) } \left\|  f \right\|_{\mathcal{H}}   $$

and therefore

$$ \left\|f\right \|_{\infty} \leq  \operatorname{sup}_{x \in \mathcal{X}}\left\lvert k(x,x) \right \rvert  \left\|  f \right\|_{\mathcal{H}}   $$

As a particular case 

$$ k(x,x') \leq  \sqrt{ k(x,x) } \sqrt{ k(x',x') }   $$

Moreover, as all probability measures are finite we have ensured that $k$ is integrable with respect to any probability measure in virtue of Hölder's inequality.

\item $\text {The RKHS } \mathcal{H} \text { of } k \text { is dense in } L_2\left(F_{X_0}\right) \text {. }$This is also satisfied by Gaussian kernels \citep{steinwart2008support}. 
\item The distribution $F_{X_1}$ is absolutely continuous with respect to $F_{X_0}$ with the Radon-Nikodym derivative $g:=\mathrm{d}F_{X_1} / \mathrm{d} F_{X_0}$ satisfying $g \in L_2\left(F_{X_0}\right)$. By this we are expressing formally that the marginal density functions of $F_{X_0}$ and $F_{X_1}$ should not be very different. It also implies the support equality condition used throughout Section \ref{sec:pop}. 

\item $\left(T_1^{*},\Delta_1,0,X_1\right), \ldots,\left(T_n^{*},\Delta_n,0,X_n\right)$ are i.i.d. observations from the control group, and $X_1^1 \ldots, X_m^1 $ are i.i.d. observations of the random variable $X^1$.
\item $C \indep T | Z $ (independence) and $\Delta \indep X | T,Z$ (conditional independence of the censoring indicator and the covariates given the realized time).
This automatically implies 
$$G_0(t,x)=P(C>t | T=t,X=x,Z=z)=P(C>t | T=t)=P(C>t)=:G_0(t)$$ In this case, it is possible to estimate $G_0(t)$ using the marginal reverse Kaplan-Meier estimator- flipping the event indicators and using the canonical Kaplan-Meier estimator \citep{gill1980censoring}. See \cite{stute1993, stute1996distributional} for further comments on these couple of assumptions. 
\item $$\frac{1}{G_0^2}\textrm{ and }\frac{1}{\hat{G}_0^2} < \infty$$
This ensures that population and empirical covariance operators are well defined as Bochner integrals \citep{dinculeanu2000vector}. 

\subsection{Consistency and convergence rate}
Our main theoretical contribution is the convergence rate of the stochastic error in RKHS norm in Theorem \ref{rate_stoch}. Once established, we complement our finding with the literature aiming to prove consistency in Corollary \ref{cons} and find the final convergence rate in Corollary \ref{final_rate}.

\end{enumerate}

\begin{theorem} \label{rate_stoch} (Convergence rate of the stochastic error)
Consider the causal survival embedding estimator $\hat{\mu}_{T\langle 0 \mid 1\rangle}$. Suppose that conditions i.) to vi.) (ii.) is optional) hold. Then we have for the stochastic error
$$\left\|\widehat{\mathcal{C}^*}_{T X}\left(\widehat{\mathcal{C}^*}_{X X}+\varepsilon_n I\right)^{-1} \widehat{\mu}_{X_1}-\mathcal{C}_{T X}\left( \mathcal{C}_{X X}+\varepsilon_n I\right)^{-1} \mu_{X_1} \right\|_{\mathcal{H}} = O_p\left(n^{-1/2}\varepsilon_n^{-1} \right) $$

\end{theorem}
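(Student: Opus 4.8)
The strategy is the one used for regularized conditional mean embeddings, adapted to the censored, inverse‑probability‑weighted setting. By Lemma \ref{l3} the estimator equals $\widehat{\mathcal{C}}^*_{TX}(\widehat{\mathcal{C}}^*_{XX}+\varepsilon_n I)^{-1}\widehat\mu_{X_1}$, and we compare it against its ``regularized population'' analogue $\mathcal{C}_{TX}(\mathcal{C}_{XX}+\varepsilon_n I)^{-1}\mu_{X_1}$ at the \emph{same} $\varepsilon_n$, so that no approximation/bias term appears. First I would introduce the \emph{oracle‑weighted} operators $\widetilde{\mathcal{C}}_{XX}f=\frac1n\sum_i w_i k(\cdot,X_i)f(X_i)$ and $\widetilde{\mathcal{C}}_{TX}f=\frac1n\sum_i w_i l(\cdot,T_i^*)f(X_i)$ built from the \emph{true} weights $w_i=\Delta_i/G_0(T_i^*)$; since $dF_{X^0T^0}=dF_0^{(*)}/G_0$, these are unbiased for $\mathcal{C}_{XX}$ and $\mathcal{C}_{TX}$. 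Then I would telescope the error in $\mathcal{H}$‑norm into three pieces: $(\mathrm{I})=(\widehat{\mathcal{C}}^*_{TX}-\mathcal{C}_{TX})(\widehat{\mathcal{C}}^*_{XX}+\varepsilon_n I)^{-1}\widehat\mu_{X_1}$; $(\mathrm{II})=\mathcal{C}_{TX}[(\widehat{\mathcal{C}}^*_{XX}+\varepsilon_n I)^{-1}-(\mathcal{C}_{XX}+\varepsilon_n I)^{-1}]\widehat\mu_{X_1}$; and $(\mathrm{III})=\mathcal{C}_{TX}(\mathcal{C}_{XX}+\varepsilon_n I)^{-1}(\widehat\mu_{X_1}-\mu_{X_1})$.

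The two ``building‑block'' rates to establish are: (a) $\|\widehat\mu_{X_1}-\mu_{X_1}\|_{\mathcal{H}}=O_p(m^{-1/2})=O_p(n^{-1/2})$, the classical concentration of an empirical kernel mean embedding, which only needs the bounded‑kernel assumption i.); and (b) $\|\widehat{\mathcal{C}}^*_{XX}-\mathcal{C}_{XX}\|_{\mathrm{op}}=O_p(n^{-1/2})$, and the same for $\widehat{\mathcal{C}}^*_{TX}-\mathcal{C}_{TX}$. For (b) I would split $\widehat{\mathcal{C}}^*_{XX}-\mathcal{C}_{XX}=(\widehat{\mathcal{C}}^*_{XX}-\widetilde{\mathcal{C}}_{XX})+(\widetilde{\mathcal{C}}_{XX}-\mathcal{C}_{XX})$: the second summand is a Hilbert--Schmidt‑valued empirical mean of i.i.d.\ terms uniformly bounded in operator norm by $\sup_x k(x,x)\cdot\sup_t G_0(t)^{-1}$ (assumptions i.) and vi.)), so a second‑moment/Chebyshev argument gives $O_p(n^{-1/2})$; the first summand is dominated by $\sup_x k(x,x)\cdot\sup_{t\le\tau}|\widehat G_0(t)^{-1}-G_0(t)^{-1}|$, which is $O_p(n^{-1/2})$ because the reverse Kaplan--Meier estimator is uniformly $\sqrt n$‑consistent on the support of the observed times \citep{gill1980censoring} and $G_0$ is bounded away from $0$ there (assumption vi.)). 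The $\widehat{\mathcal{C}}^*_{TX}$ case is identical, using that $\|l(\cdot,T_i^*)\|_{\mathcal{H}}=\sqrt{l(T_i^*,T_i^*)}$ is bounded by assumption i.). Note assumption ii.) plays no role in any of this, consistent with its being optional.

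Finally I would assemble the $\varepsilon_n$‑powers. For $(\mathrm{I})$, $\|(\widehat{\mathcal{C}}^*_{XX}+\varepsilon_n I)^{-1}\|_{\mathrm{op}}\le\varepsilon_n^{-1}$ and $\|\widehat\mu_{X_1}\|_{\mathcal{H}}=O_p(1)$ give $O_p(n^{-1/2}\varepsilon_n^{-1})$. For $(\mathrm{III})$, $\|\mathcal{C}_{TX}(\mathcal{C}_{XX}+\varepsilon_n I)^{-1}\|_{\mathrm{op}}\le\varepsilon_n^{-1}\|\mathcal{C}_{TX}\|_{\mathrm{op}}$ times $O_p(m^{-1/2})$ again gives $O_p(n^{-1/2}\varepsilon_n^{-1})$. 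The delicate term is $(\mathrm{II})$: after the resolvent identity $(\widehat{\mathcal{C}}^*_{XX}+\varepsilon_n I)^{-1}-(\mathcal{C}_{XX}+\varepsilon_n I)^{-1}=(\mathcal{C}_{XX}+\varepsilon_n I)^{-1}(\mathcal{C}_{XX}-\widehat{\mathcal{C}}^*_{XX})(\widehat{\mathcal{C}}^*_{XX}+\varepsilon_n I)^{-1}$, a naive bound would cost $\varepsilon_n^{-2}$. To recover $\varepsilon_n^{-1}$ I would (i) use the factorization $\mathcal{C}_{TX}=\mathcal{C}_{TT}^{1/2}V\mathcal{C}_{XX}^{1/2}$ with $\|V\|_{\mathrm{op}}\le1$, whence $\|\mathcal{C}_{TX}(\mathcal{C}_{XX}+\varepsilon_n I)^{-1}\|_{\mathrm{op}}=O(\varepsilon_n^{-1/2})$, and (ii) invoke assumption iii.), which yields $\mu_{X_1}\in\operatorname{range}(\mathcal{C}_{XX}^{1/2})$ (concretely $\mu_{X_1}=\mathcal{C}_{XX}^{1/2}a$ with $\|a\|$ controlled by $\|g\|_{L_2(F_{X_0})}$), hence $\|(\widehat{\mathcal{C}}^*_{XX}+\varepsilon_n I)^{-1}\widehat\mu_{X_1}\|_{\mathcal{H}}=O_p(\varepsilon_n^{-1/2})$; the transfer from the population to the empirical resolvent here uses the standard perturbation fact $\|(\widehat{\mathcal{C}}^*_{XX}+\varepsilon_n I)^{-1/2}(\mathcal{C}_{XX}+\varepsilon_n I)^{1/2}\|_{\mathrm{op}}=O_p(1)$, valid whenever $n^{-1/2}\varepsilon_n^{-1}$ stays bounded, which we may assume since otherwise the asserted rate is vacuous. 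The two half‑powers $\varepsilon_n^{-1/2}$ combine with the $O_p(n^{-1/2})$ from $\|\mathcal{C}_{XX}-\widehat{\mathcal{C}}^*_{XX}\|_{\mathrm{op}}$ to give $(\mathrm{II})=O_p(n^{-1/2}\varepsilon_n^{-1})$; summing $(\mathrm{I})$--$(\mathrm{III})$ yields the claim.

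The main obstacle is precisely this last stage: one must propagate the Kaplan--Meier estimation error through the inverse‑probability‑weighted covariance operators without degrading the $\sqrt n$ rate, and \emph{simultaneously} keep the regularization exponent at $\varepsilon_n^{-1}$ rather than $\varepsilon_n^{-2}$ in the resolvent‑difference term $(\mathrm{II})$ — which cannot be done with crude operator‑norm bounds and instead requires the source/regularity condition iii.) together with the cross‑covariance factorization and an operator‑perturbation estimate for the regularized resolvents.
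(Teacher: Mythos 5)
Your proposal is correct and follows the same overall architecture as the paper's proof: a telescoping decomposition of the difference into three terms (your (I)--(III) are a rearrangement of the paper's (A)--(C)), reduction to the two operator-norm rates $\|\widehat{\mathcal{C}}^*_{XX}-\mathcal{C}_{XX}\|=O_p(n^{-1/2})$ and $\|\widehat{\mathcal{C}}^*_{TX}-\mathcal{C}_{TX}\|=O_p(n^{-1/2})$, and the same $\varepsilon_n$-bookkeeping in which the resolvent-difference term is the dominant one at $O_p(n^{-1/2}\varepsilon_n^{-1})$, using the range condition from assumption iii.) to keep half-powers of $\varepsilon_n$ where needed. Where you genuinely diverge is in how the covariance-operator concentration is proved. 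The paper treats $\frac1n\sum_i W_i k(\cdot,X_i)-\mu_{X^0}$ as $\nu(\hat F_0^{(*)},\hat G_0)-\nu(F_0^{(*)},G_0)$ for an RKHS-valued operator $\nu(A,B)=\int k(\cdot,x)B(x)^{-1}\,dA(x)$, proves Hadamard-differentiability of $\nu$, and invokes the functional delta method together with Donsker's theorem and weak convergence of the reverse Kaplan--Meier process; this delta-method step is the paper's advertised main theoretical contribution, and the resulting bounds are then fed into the paper's Lemma \ref{lem:ineq}. You instead split off an oracle-weighted operator built from the true weights $\Delta_i/G_0(T_i^*)$ — an i.i.d.\ Hilbert--Schmidt-valued mean handled by a second-moment bound — and control the remainder by $\sup_x k(x,x)\cdot\sup_{t\le\tau}|\widehat G_0(t)^{-1}-G_0(t)^{-1}|$ via uniform $\sqrt n$-consistency of the reverse Kaplan--Meier estimator. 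Your route is more elementary and avoids the von Mises calculus entirely, at the price of not producing the weak limit (the Gaussian process in the RKHS) that the delta method yields as a by-product and that the paper leans on elsewhere; both routes share the same tail caveat that $G_0$ must stay bounded away from zero on the relevant time range (assumption vi.)). The only loose end in your sketch is the throwaway remark that boundedness of $n^{-1/2}\varepsilon_n^{-1}$ "may be assumed since otherwise the rate is vacuous" — the bound is not literally vacuous in every such regime — but this is immaterial in the regime $n^{1/2}\varepsilon_n\to\infty$ under which the theorem is actually used.
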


\begin{proof}
We start with the same breakdown as in proof of Theorem 11 in \citet*{fukumizubayes}: \begin{equation*}
\begin{split}
& \left\|\widehat{\mathcal{C}^*}_{T X}\left(\widehat{\mathcal{C}^*}_{X X}+\varepsilon_n I\right)^{-1} \widehat{\mu}_{X_1}-\mathcal{C}_{T X}\left( \mathcal{C}_{X X}+\varepsilon_n I\right)^{-1} \mu_{X_1} \right\|_{\mathcal{H}} \leq \\ \leq &\left\|\widehat{\mathcal{C}^*}_{T X}\left(\widehat{\mathcal{C}}_{X X}+\varepsilon_n I\right)^{-1}\left(\widehat{\mu}_{X_1}-\mu_{X_1}\right)\right\|_{\mathcal{H}}  \quad : \quad  (A) \\
+ &\left\|\left(\widehat{\mathcal{C}^*}_{T X}-\mathcal{C}_{T X}\right)\left(\mathcal{C}_{X X}+\varepsilon_n I\right)^{-1} \mu_{X_1}\right\|_{\mathcal{H}} \quad : \quad  (B) \\
+ & \left\|\widehat{\mathcal{C}^*}_{TX}\left(\widehat{\mathcal{C}^*}_{X X}+\varepsilon_n I\right)^{-1}\left(\mathcal{C}_{XX}-\widehat{\mathcal{C}^*}_{X X}\right)\left(\mathcal{C}_{X X}+\varepsilon_n I\right)^{-1} \mu_{X_1}\right\|_{\mathcal{H}} \quad  : \quad   (C)
\end{split}
\end{equation*}

(A): From \cite{cme} we have that 
$$
\textrm{(A)} = O_p\left(\varepsilon_n^{-1 / 2} n^{-1 / 2}\right)
$$

as it can be seen to rely on weak convergence of uncensored kernel mean embeddings at speed $\frac{1}{\sqrt n}$ \citep{ledoux1991probability, berlinet2011reproducing} and on applying Theorem 1 in \cite{baker1973joint} to $\frac{ d\hat F_{0}^{(*)}  }{\hat G_0}$.

(B): using Lemma 24 in \citet{cme}

\begin{align*}
\left\|\left(\widehat{\mathcal{C}^*}_{T X}-\mathcal{C}_{T X}\right)\left(\mathcal{C}_{X X}+\varepsilon_n I\right)^{-1} \mu_{X_1}\right\|_{\mathcal{H}} & \leq\left\|\widehat{\mathcal{C}^*}_{T X}-\mathcal{C}_{T X}\right\|\left\|\left(\mathcal{C}_{X X}+\varepsilon_n I\right)^{-1} \mu_{X_1}\right\|_{\mathcal{G}} \\
& \leq\left\|\widehat{\mathcal{C}^*}_{T X}-\mathcal{C}_{T X}\right\| \cdot O_p\left(\varepsilon_n^{-1/2} \right) 
\end{align*}

(C): proceeding as in \citet{cme}
$$
\textrm{(C)} =\left\|\widehat{\mathcal{C}}^*_{X X}-\mathcal{C}_{X X}\right\| \cdot O_p\left(\varepsilon_n^{-1} \right)
$$

Let $\varepsilon_n>0$ be a regularization constant. Then if $\varepsilon_n \rightarrow 0$ and $n^{1 / 2} \varepsilon_n \rightarrow \infty$ as $n \rightarrow \infty$, we have consistency provided that we show the tight uniform bounds 
\begin{equation}\label{t1}
    \left\|\widehat{\mathcal{C}^*}_{XX}-\mathcal{C}_{X X}\right\|=O_p\left(n^{-1 / 2}\right)
\end{equation}

\begin{equation}\label{t2}
\left\|\widehat{\mathcal{C}^*}_{T X}-\mathcal{C}_{T X}\right\|=O_p\left(n^{-1 / 2}\right)
\end{equation}

and the term with the the slowest rate would be (C). We will have into account that $\|\cdot\| \leq\|\cdot\|_{H S}$.

\begin{lemma}

Define $K_i=k(\cdot,X_i)-\mu_{X^0}$, $L_i=l(\cdot,T^*_i)-\mu_{T^0}$, $K(X^0)=k(\cdot,X^0)-\mu_{X^0}$, $L(T^0)=l(\cdot,T^0)-\mu_{T^0}$ where $\mu_{X^0}$ and $\mu_{T^0}$ are the marginal kernel mean embeddings $E_{X^0}\left[k(\cdot, X^0)\right]$ and $E_{T^0}\left[k(\cdot, T^0)\right]$. Then we have:

$$\|\widehat{\mathcal{C}^*}_{T X}-\mathcal{C}_{T X}\|_{H S}^2= \left\|\frac{1}{n} \sum_{i=1}^nW_i\left(K_i-\frac{1}{n} \sum_{j=1}^nW_j K_j\right)\left(L_i-\frac{1}{n} \sum_{j=1}^n W_jL_j\right)-E[K(X^0) L(T^0)]\right\|_{\mathcal{G}\otimes\mathcal{H}}^2$$

\end{lemma}

\begin{proof} Direct adaptation of \cite{fukumizu07a}. 
\end{proof}
Deriving the following inequality in Lemma \ref{lem:ineq} is more involved however

\begin{lemma}\label{lem:ineq}

\begin{align*}& \|\widehat{\mathcal{C}^*}_{T X}-\mathcal{C}_{T X}\|_{H S}  \leq \\ &\leq \left\|\frac{1}{n} \sum_{i=1}^nW_iK_iL_i-E[K(X^0) L(T^0)]\right\|_{\mathcal{G}\otimes\mathcal{H}} +\left|2 -\frac{1}{n}\sum_{i=1}^nW_i\right|\left\|\frac{1}{n}\sum_{i=1}^nW_i K_i\right\|_{\mathcal{G}}\left\|\frac{1}{n}\sum_{i=1}^nW_iL_i \right\|_{\mathcal{H}} 
\end{align*}
\end{lemma}
\begin{proof}
See Appendix.
\end{proof}

Let us denote for simplicity of notation $\mu_{X^0}=\mu_0$. Having a closer look at the term $\left\|\frac{1}{n}\sum_{i=1}^nW_i K_i\right\|_{\mathcal{G}}$ in right hand side of Lemma \ref{lem:ineq}
\begin{align*}\frac{1}{n}\sum_{i=1}^nW_iK_i&=\frac{1}{n}\sum_{i=1}^nW_i(k(\cdot,X_i)-\mu_0)=\frac{1}{n}\sum_{i=1}^n(W_ik(\cdot,X_i)-W_i\mu_0)\\&=\frac{1}{n}\sum_{i=1}^nW_ik(\cdot,X_i)-\mu_0+\mu_0-\mu_0\left(\frac{1}{n}\sum_{i=1}^nW_i\right)=\\
&=\left({\frac{1}{n}\sum_{i=1}^nW_ik(\cdot,X_i)-\mu_0}\right)+\mu_0\left(1-\frac{1}{n}\sum_{i=1}^nW_i\right)
\end{align*}

Furthermore,

\begin{align*}
{\frac{1}{n}\sum_{i=1}^nW_ik(\cdot,X_i)-\mu_0}=\int _\mathcal{X}k(\cdot,X^0)\frac{d\hat F_0^{(*)}}{\hat{G_0}} -\int _\mathcal{X}k(\cdot,X^0)\frac{d F_0^{(*)}}{G_0}=:\nu(\hat F_0^{(*)},\hat{G}_0)- \nu(F_0^{(*)},{G_0}) \in \mathcal{G}
\end{align*}

It is important to note that $\nu$ is an operator taking values in a Hilbert space and showing its Hadamard-differentiability is not straightforward. Let, tor $n \geq 1$,
$S_n=\sum_{i=1}^n k\left(\cdot, X_i\right)$ and
$\Lambda_n=\sqrt{n}\left(\frac{S_n}{n}-\mathcal{I}_\mu\right) .$
Since $\mathcal{I}_\mu=\int K\left(\cdot, X_0\right) d F_{X^0}=E\left(K\left(\cdot, X_0\right)\right)$, one could prove by using the Hilbert space version of the Central Limit Theorem that the sequence $\left(\Lambda_n\right)_{n \geq 1}$ converges weakly to a centered gaussian variable \citep*{ledoux1991probability}. The elements preventing us from proceeding this way are the $W_i$, which are breaking the i.i.d. assumption needed by this CLT. 

First, it is known that $\sqrt{n}(\hat{G}_0-G_0)$ converges weakly in $D[0, \tau]$ to a tight, mean zero Gaussian process (\cite{fleming2011counting, andersen2012statistical}). Second, by Donsker's theorem, $\sqrt n(\hat F_0^{(*)}-F_0^{(*)})$ also converges weakly to a tight, mean zero Gaussian process- as a reminder, $d\hat F_0^{(*)}(t,x)=\frac{1}{n} \sum_{i=1}^{n}\Delta_i \delta_{T^*_i}(t)\delta_{X_i}(x)$ is just the empirical measure of the uncensored observations on the arm with $Z_i=0$

We now proceed to show Hadamard-differentiability of $\nu$ for $\mathcal{X}=\mathbb{R}$. The following definitions are taken from \cite{van2000asymptotic} sections 18.6 and 20.3
\begin{definition}
Let $T=[a, b]$ be an interval in the extended real line. We denote by $C[a, b]$ the set of all continuous functions $z:[a, b] \mapsto \mathbb{R}$ and by $D[a, b]$ the set of all functions $z:[a, b] \mapsto \mathbb{R}$ that are right continuous and whose limits from the left exist everywhere in $[a, b]$. (The functions in $D[a, b]$ are called cadlag: continue à droite, limites à gauche.) It can be shown that $C[a, b] \subset D[a, b] \subset \ell^{\infty}[a, b]$. We always equip the spaces $C[a, b]$ and $D[a, b]$ with the uniform norm $\|z\|_T$, which they "inherit" from $\ell^{\infty}[a, b]$
\end{definition}

The space $D[a, b]$ is referred to here as the Skorohod space and the set $B V_M[a, b]$ is the set of all cadlag functions $z:[a, b] \mapsto[-M, M] \subset \mathbb{R}$ of variation bounded by $M$. We also define:

$B V^1_M[a, b]=\{B\in B V_M[a, b] : x \mapsto k(x,x) \in L^1(B)  \}$

$D^2[a,b]=\{A\in D[a, b] : A \in L^2(B)  \textrm{ for all } B \in B V^1_M[a, b]\}$

We need to restrict our operator to
${D}_M \equiv D^{2}[-\infty, \infty]\times$ $B V^{1}_M[-\infty, \infty]$ for existence of Bochner integrals, see Theorem 105 in \cite{berlinet2011reproducing}. Nevertheless, thanks to assumptions i.) and vi.) this is always the case as far as we operate on $D[a, b] \times B V_M[a, b]$.

\begin{lemma}
Let $\mathcal{H}$ be an RKHS of functions $f : \mathbb{R} \longrightarrow \mathbb{R}$ with reproducing kernel $k$. Then the operator $\left(A, B\right) \mapsto \int k(\cdot,x)A(x) d B(x) \in \mathcal{H}$ is Hadamard-differentiable from the domain ${D}_M \equiv D^{2}[-\infty, \infty]\times B V^{1}_M[-\infty, \infty] \subset D[-\infty, \infty] \times D[-\infty, \infty]$ into $(\mathcal{H},\sqrt{\langle\cdot, \cdot \rangle_{\mathcal{H}} })$ at every pair of functions of bounded variation $\left(A, B\right)$.
\end{lemma}

\begin{proof}

We set as a candidate $\psi_{A, B}^{\prime}(\alpha, \beta)(\cdot)=\int k(\cdot,x)A(x)d \beta(x)+\int k(\cdot,x) \alpha(x) d B(x),$ \\ for $ (\alpha,\beta) \in D_M$.

We will use the fact that $\norm{k(\cdot,x)}_{\mathcal{H}}^2=\langle k(\cdot,x),k(\cdot,x)\rangle=k(x,x) $. 

 For sequences $t_n \rightarrow 0$ in $\mathbb{R}$,  $\alpha_n \rightarrow \alpha$, and $\beta_n \rightarrow \beta$ in $D^{2}[-\infty, \infty]$ and $B V^{1}_M[-\infty, \infty]$ respectively, define $A_n \equiv A+t_n \alpha_n$ and $B_n \equiv B+t_n \beta_n$. Since we require that $\left(A_n, B_n\right) \in$ $D_M$, we know that the total variation of $B_n$ is bounded by $M$. Consider first the derivative of $\psi$, and note that 
 
 $$\begin{aligned} & \norm{\frac{\int k(\cdot,x) A_n(x) d B_n(x)-\int k(\cdot,x) A (x)d B(x)}{t_n}-\psi_{A, B}^{\prime}\left(\alpha_n, \beta_n\right)}_{\mathcal{H}}= \\ & 
  \norm{\int k(\cdot,x) \alpha_n(x) d\left(B_n-B\right)(x)}_{\mathcal{H}}
  = \\ & 
  \norm{\int k(\cdot,x) \alpha(x) d\left(B_n-B\right)(x)+\int k(\cdot,x)\left(\alpha_n(x)-\alpha(x)\right) d\left(B_n-B\right) (x)}_{\mathcal{H}}\leq \\ & 
   \int \norm{k(\cdot,x)}_{\mathcal{H}} |\alpha (x)|d\left(B_n-B\right)(x)+\int \norm{k(\cdot,x)}_{\mathcal{H}}|\alpha_n(x)-\alpha(x)|d\left(B_n-B\right) (x)=\\ & 
     \int \sqrt{k(x,x)}| \alpha (x)|d\left(B_n-B\right)(x)+\int \sqrt{k(x,x)}\left(|\alpha_n(x)- \alpha(x)|\right) d\left(B_n-B\right) (x) \equiv \textrm{(1) + (2)}
   \end{aligned}$$



    Because we assumed that $k$ is bounded, (2) converges to zero as since both $B_n$ and $B$ have total variation bounded by $M$ and $k$ is bounded.

For convergence of (1) to zero as $t_n \longrightarrow 0$ we follow the same argument as in \cite{van2000asymptotic} Lemma 20.10. with $\phi$ therein the identity map. 
 
Since the map $(\alpha, \beta) \mapsto \psi_{A, B}^{\prime}(\alpha, \beta)$ is continuous and linear, the desired Hadamard differentiability of $\psi$ will follow because (1) and (2) converge to zero. 

\end{proof}
Our operator $\nu$ was defined for $(A,B) \in D_M$ as
$$\nu: (A, B) \mapsto\left(A, \frac{1}{B}\right) \mapsto \int_{\mathcal{X}}k(\cdot,X) \frac{1}{B(X)} d A(X)$$

We can assert that $\sqrt{n}\left(\nu(\hat F_0^{(*)},\hat{G}_0)- \nu(F_0^{(*)},{G_0})\right)$
converges weakly to a process in a Polish RKHS in virtue of the chain rule of Hadamard-differentiability, the fact that $B \mapsto 1/B$ is Hadamard differentiable on $\left\{B \in \ell^{\infty}(\mathcal{X}): \inf _{x \in \mathcal{X}}|B(x)|>0\right\}$, Lemma 6 and the Functional Delta Method \citep{kosorok2008introduction}. Therefore, in virtue of Prokhorov's theorem the limiting process is uniformly tight and therefore: $$\left\|\frac{1}{n}\sum_{i=1}^nW_i(k(\cdot,X_i)-\mu_{X^0})\right\|_{\mathcal{G}}=O_p\left(n^{-1 / 2}\right), \quad \left\|\frac{1}{n}\sum_{i=1}^nW_i(l(\cdot,T^*_i)-\mu_{T^0})\right\|_{\mathcal{H}}=O_p\left(n^{-1 / 2}\right) $$

By consistency of real Kaplan-Meier integrals \citep{stute1993}: $\frac{1}{n} \sum_{i=1}^n W_i = o_p(1)$. In addition, the tensor product norm in right hand side of Lemma \ref{lem:ineq} can be seen to be $O_p(n^{-1/2})$ combining our arguments with those in Lemma 5 from \cite{fukumizu07a}. In virtue of Slutsky's theorem, we have just shown the tight uniform bounds \ref{t1}, \ref{t2} we were looking for.
\end{proof}

\begin{Coro} \label{cons}(Consistency) Suppose that Assumptions i.) to vi.) are satisfied. Let $\varepsilon_n>0$ be a regularization constant. Then if $\varepsilon_n \rightarrow 0$ and $n^{1 / 2} \varepsilon_n \rightarrow \infty$ as $n \rightarrow \infty$, we have
$$
\left\|\hat{\mu}_{Y\langle 0 \mid 1\rangle}-\mu_{Y\langle 0 \mid 1\rangle}\right\|_{\mathcal{H}} \rightarrow 0
$$
in probability as $n \rightarrow \infty$.
\end{Coro}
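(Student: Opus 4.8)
The plan is to combine the rate for the stochastic error already obtained in Theorem \ref{rate_stoch} with a deterministic (approximation/regularization) bound, via a triangle inequality split of the total error $\|\hat{\mu}_{T\langle 0 \mid 1\rangle}-\mu_{T\langle 0 \mid 1\rangle}\|_{\mathcal{H}}$. Using the operator representations $\hat{\mu}_{T\langle 0 \mid 1\rangle}=\widehat{\mathcal{C}^*}_{TX}(\widehat{\mathcal{C}^*}_{XX}+\varepsilon_n I)^{-1}\hat{\mu}_{X_1}$ from Lemma \ref{l3} and $\mu_{T\langle 0 \mid 1\rangle}=\mathcal{C}_{TX}\mathcal{C}_{XX}^{-1}\mu_{X_1}$ (the population counterfactual embedding, valid under conditional exogeneity and the support condition of Lemma 1), I would write
$$\|\hat{\mu}_{T\langle 0 \mid 1\rangle}-\mu_{T\langle 0 \mid 1\rangle}\|_{\mathcal{H}} \le \underbrace{\left\|\widehat{\mathcal{C}^*}_{TX}(\widehat{\mathcal{C}^*}_{XX}+\varepsilon_n I)^{-1}\hat{\mu}_{X_1}-\mathcal{C}_{TX}(\mathcal{C}_{XX}+\varepsilon_n I)^{-1}\mu_{X_1}\right\|_{\mathcal{H}}}_{\text{stochastic error}} + \underbrace{\left\|\mathcal{C}_{TX}(\mathcal{C}_{XX}+\varepsilon_n I)^{-1}\mu_{X_1}-\mathcal{C}_{TX}\mathcal{C}_{XX}^{-1}\mu_{X_1}\right\|_{\mathcal{H}}}_{\text{regularization bias}}.$$

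The first term is exactly the object bounded in Theorem \ref{rate_stoch}, so it is $O_p(n^{-1/2}\varepsilon_n^{-1})$; under the stated conditions $\varepsilon_n\to 0$ and $n^{1/2}\varepsilon_n\to\infty$ we have $n^{-1/2}\varepsilon_n^{-1}\to 0$, hence the stochastic error vanishes in probability. For the second (purely deterministic) term, I would invoke Assumptions ii.) and iii.): density of $\mathcal{H}$ in $L_2(F_{X_0})$ together with $g=\mathrm{d}F_{X_1}/\mathrm{d}F_{X_0}\in L_2(F_{X_0})$ guarantees that $\mu_{X_1}$ lies in the closure of the range of $\mathcal{C}_{XX}$, which is precisely the condition under which the Tikhonov-regularized inverse converges: $(\mathcal{C}_{XX}+\varepsilon_n I)^{-1}\mu_{X_1}\to \mathcal{C}_{XX}^{-1}\mu_{X_1}$ as $\varepsilon_n\to 0$, and applying the bounded operator $\mathcal{C}_{TX}$ preserves this convergence. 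This is the standard spectral-calculus argument for convergence of Tikhonov regularization (as in \cite{fukumizubayes}, \cite{cme}), and it requires no randomness. Combining the two pieces by Slutsky's theorem yields the claim.

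The main obstacle — and the step deserving the most care — is the regularization-bias term: one must verify that the population-level objects are well defined and that $\mu_{X_1}$ genuinely belongs to the appropriate subspace (the closure of $\operatorname{ran}\mathcal{C}_{XX}$, or equivalently that $\mathcal{C}_{XX}^{-1}\mu_{X_1}$ exists as a limit in $\mathcal{H}$) so that the Tikhonov error tends to zero; this is where Assumptions i.), ii.), iii.) and vi.) are all used, the last to ensure $\mathcal{C}_{XX}$ and $\mathcal{C}_{TX}$ exist as genuine Bochner-integral operators. A secondary, lighter point is bookkeeping: the rate in Theorem \ref{rate_stoch} is stated for $\varepsilon_n$ fixed, so one should note that its proof goes through verbatim for a sequence $\varepsilon_n$ satisfying the two growth conditions (indeed term (C) there, the slowest, is $O_p(n^{-1/2}\varepsilon_n^{-1})$, matching). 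Once these are in place the corollary follows immediately from the triangle inequality and the two limits. Note finally that the statement is written with $Y$ in place of $T$; this is a harmless notational slip and the proof is identical with $Y\equiv T$.
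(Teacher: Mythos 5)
Your proposal is correct and follows essentially the same route as the paper: the appendix proof is exactly the triangle-inequality split into the stochastic error (controlled by Theorem~\ref{rate_stoch}, which vanishes since $n^{-1/2}\varepsilon_n^{-1}\to 0$) and the deterministic approximation error, whose convergence to zero under Assumptions ii.) and iii.) is delegated to the Tikhonov-regularization results of \cite{cme}. Your write-up is in fact more explicit than the paper's (which only displays the decomposition), and your caveats about the existence of $\mathcal{C}_{XX}^{-1}\mu_{X_1}$ only as a limit in the closure of $\operatorname{ran}\mathcal{C}_{XX}$ are exactly the right points of care.
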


\begin{proof}
See Appendix. 
\end{proof}

Informally, $\alpha$ and $\beta$ in the following result quantify respectively how similar are $F_{X^0}$ and $F_{X^1}$ (the bigger, the more similar) and the smoothness of the map $x \mapsto {\mu}_{T^0 \mid X^0=x}$ (the bigger, the smoother). 

\begin{Coro} \label{final_rate}(Convergence rate) Suppose that Assumptions i.) to vi.) in our paper and Assumption 3, 4 in \cite{cme} hold with $\alpha+ \beta \leq 1$ both non-negative. Let $\varepsilon_n>0$ be a regularization constant. Let $c>0$ be an arbitrary constant, and set $\varepsilon_n=c n^{-1 /(1+\beta+\max (1-\alpha, \alpha))}$. Then we have
$$
\left\|\hat{\mu}_{Y\langle 0 \mid 1\rangle}-\mu_{Y\langle 0 \mid 1\rangle}\right\|_{\mathcal{H}}=O_p\left(n^{-(\alpha+\beta) / 2(1+\beta+\max (1-\alpha, \alpha))}\right) 
$$
\end{Coro}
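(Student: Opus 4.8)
The plan is the classical bias--variance decomposition for a regularised inverse problem, coupled with the observation that the bias half of it is untouched by censoring. Using the operator representation of Lemma~\ref{l3} and the triangle inequality, I would split
\begin{align*}
\left\|\hat{\mu}_{T\langle 0\mid 1\rangle}-\mu_{T\langle 0\mid 1\rangle}\right\|_{\mathcal{H}}
&\le\left\|\widehat{\mathcal{C}^*}_{TX}\left(\widehat{\mathcal{C}^*}_{XX}+\varepsilon_n I\right)^{-1}\widehat{\mu}_{X_1}-\mathcal{C}_{TX}\left(\mathcal{C}_{XX}+\varepsilon_n I\right)^{-1}\mu_{X_1}\right\|_{\mathcal{H}}\\
&\quad+\left\|\mathcal{C}_{TX}\left(\mathcal{C}_{XX}+\varepsilon_n I\right)^{-1}\mu_{X_1}-\mu_{T\langle 0\mid 1\rangle}\right\|_{\mathcal{H}},
\end{align*}
the first summand being exactly the stochastic error already isolated in Theorem~\ref{rate_stoch} and the second a deterministic regularisation bias. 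The key point about the bias term is that it is a population-level object: no empirical measure and, in particular, no censoring weight $W_i$ occurs in it, so it is \emph{identical} to the deterministic error treated in \cite{cme}. Hence, under their Assumptions~3 and~4 --- the source condition of order $\alpha$ on the density ratio $g=\mathrm{d}F_{X_1}/\mathrm{d}F_{X_0}$ and the smoothness of order $\beta$ of $x\mapsto\mu_{T^0\mid X^0=x}$ --- their spectral-calculus estimate gives, when $\alpha+\beta\le 1$ (so that the source condition sits below the saturation threshold), that the bias term is $O\!\left(\varepsilon_n^{(\alpha+\beta)/2}\right)$; I would simply quote this.

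For the stochastic error I would revisit the three-term split (A)--(C) from the proof of Theorem~\ref{rate_stoch}, where the bookkeeping was deliberately crude and produced only $O_p\!\left(n^{-1/2}\varepsilon_n^{-1}\right)$. To reach the sharper rate one uses the source conditions to replace the factor $\|(\mathcal{C}_{XX}+\varepsilon_n I)^{-1}\mu_{X_1}\|_{\mathcal{G}}=O(\varepsilon_n^{-1})$ entering the bounds for (B) and (C) by the improved $O\!\left(\varepsilon_n^{-(1-\alpha)}\right)$ coming from $\mu_{X_1}\in\operatorname{Range}(\mathcal{C}_{XX}^{\alpha})$, while keeping the warped operator-perturbation bounds already established at rate $O_p(n^{-1/2})$ in \eqref{t1}--\eqref{t2}. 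Propagating these through (A)--(C) exactly as in \cite{cme} yields a refined stochastic error $O_p\!\left(n^{-1/2}\varepsilon_n^{-\rho(\alpha)}\right)$; for the final optimisation it suffices that $\rho(\alpha)\le\tfrac12\bigl(1-\alpha+\max(1-\alpha,\alpha)\bigr)$, which is precisely what the CME refinement delivers. The thing that really has to be checked --- and the only step that is genuinely new relative to \cite{cme} --- is that none of this sharpening is destroyed by the reweighting: the censoring weights enter only through $\widehat{\mathcal{C}^*}_{XX}$ and $\widehat{\mathcal{C}^*}_{TX}$, and the empirical-process machinery assembled for Theorem~\ref{rate_stoch} (Hadamard differentiability of $\nu$, Donsker's theorem for $\sqrt{n}(\hat F_0^{(*)}-F_0^{(*)})$ and weak convergence of $\sqrt{n}(\hat{G}_0-G_0)$, tightness of the limiting $\mathcal{H}$-valued process, and $\tfrac1n\sum_i W_i=1+o_p(1)$) already supplies the $O_p(n^{-1/2})$ control at the resolution required, with no extra $\varepsilon_n$-loss.

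Finally I would combine the two bounds: the total error is $O_p\!\left(n^{-1/2}\varepsilon_n^{-\rho(\alpha)}+\varepsilon_n^{(\alpha+\beta)/2}\right)$, and substituting $\varepsilon_n=c\,n^{-1/(1+\beta+\max(1-\alpha,\alpha))}$ --- which is exactly the value that equates the two resulting powers of $n$, and is why the statement prescribes precisely this order --- turns both terms into $O_p\!\left(n^{-(\alpha+\beta)/2(1+\beta+\max(1-\alpha,\alpha))}\right)$, the asserted rate; this $\varepsilon_n$ also obeys $\varepsilon_n\to0$ and $n^{1/2}\varepsilon_n\to\infty$, so consistency (Corollary~\ref{cons}) is not lost. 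I expect the main obstacle to be the second step: the careful propagation of the source-condition exponents through (A)--(C), especially the second-order term (C), while simultaneously keeping the $W_i$-perturbation controlled; the population bias step and the concluding optimisation over $\varepsilon_n$ are routine once that bookkeeping is in place.
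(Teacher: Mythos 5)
Your proposal is correct and follows essentially the same route as the paper: the triangle-inequality split into the stochastic error of Theorem~\ref{rate_stoch} plus the purely population-level regularisation bias (exactly the decomposition of Corollary~\ref{cons}), the bias rate $O\bigl(\varepsilon_n^{(\alpha+\beta)/2}\bigr)$ quoted from \cite{cme}, the source-condition sharpening of the stochastic error to exponent $\min(-1+\alpha,-1/2)$ (your $\rho(\alpha)=\tfrac12\bigl(1-\alpha+\max(1-\alpha,\alpha)\bigr)$ is exactly this), and the final balancing of the two terms by the prescribed $\varepsilon_n$. The paper's own proof is simply terser, noting that Theorem~\ref{rate_stoch} was written for $\alpha=0$ and invoking the corresponding lemma of \cite{cme} for $\alpha>0$.
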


\begin{proof}
See Appendix. 

\end{proof}

\section{Numerical experiments}

We provide a self-contained simulation study in order to validate the large-sample properties that have been proven in the previous section. 
The underlying model for the simulation case study is

\begin{center}

$\log \tilde{T^0}=X^0_1+X^0_2+\varepsilon$

$\log {C^0}=X^0_1+X^0_2+\varepsilon'$

$\log \tilde{T^1}=2+X^1_1+X^1_2+\omega$

$\log {C^1}=2+X^1_1+X^1_2+\omega'$

\end{center}

$X^0_1$ and $X^0_2$ are independent $\mathcal{N}(0,1)$ random variables while $X^1_1$ and $X^1_2$ are also independent unit variance normal but $X^1_1$ has mean $0.5$. 

$\varepsilon$ and $\varepsilon'$ are $\mathcal{N}(c^0,1)$ and  $\mathcal{N}(0,1)$ respectively with $c^0>0$ controlling the amount of censoring (the bigger $c^0$, the more censoring in the control arm). Analogously, $\omega$ and $\omega'$ are $\mathcal{N}(c^1,1)$ and  $\mathcal{N}(0,1)$ respectively. We have set $c^0=0.2$ and $c^1=0.1$ in order to keep an incomplete information percentage of approximately $75\%$ in both arms through all $B=100$ simulation runs. We replicate the experiment for four different sample sizes $n=100,200,300,500$. We equip both the covariates and response spaces with Gaussian kernel $k\left(y, y^{\prime}\right)=\exp \left(-\left\|{y}-{y}^{\prime}\right\|_2^2 / 2 \sigma^2\right)$. The bandwidth parameter $\sigma$ is chosen via the median heuristic: $\sigma^2=\operatorname{median}\left\{\left\|y_i-y_j\right\|_2^2: i \neq j\right\} / 2$. 

We perform estimation of the causal survival mean embeddings for $B=100$ different simulation runs in the four different sample sizes scenarios. The results are visible in Figure \ref{fig:sim}. Notice the decrease of variability with sample size. We provide in Appendix \ref{ap2} the results of an experiment revealing that our estimator may have a \textit{fast rate}. What is to say, under linear ground dependency between covariates and times, the convergence rate is of stochastic order $\sqrt{n}$, despite this magnitude not being achieved for any non-negative $\alpha$ and $\beta$ in Corollary \ref{final_rate}

\begin{figure}
\centering
\begin{minipage}{.55\textwidth}
  \centering
  \includegraphics[width=\linewidth]{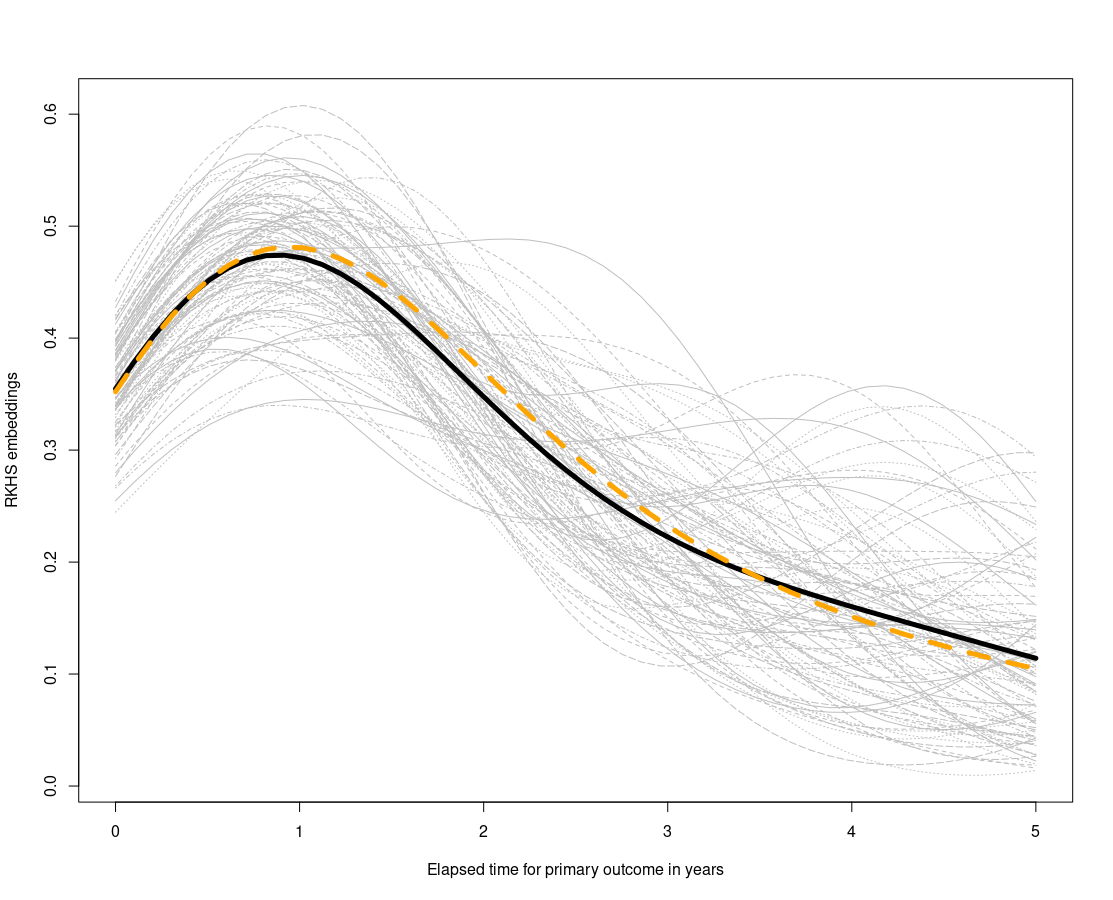}
  \captionof{figure}{$n=100$}
  \label{fig:test1}
\end{minipage}%
\begin{minipage}{.55\textwidth}
  \centering
  \includegraphics[width=\linewidth]{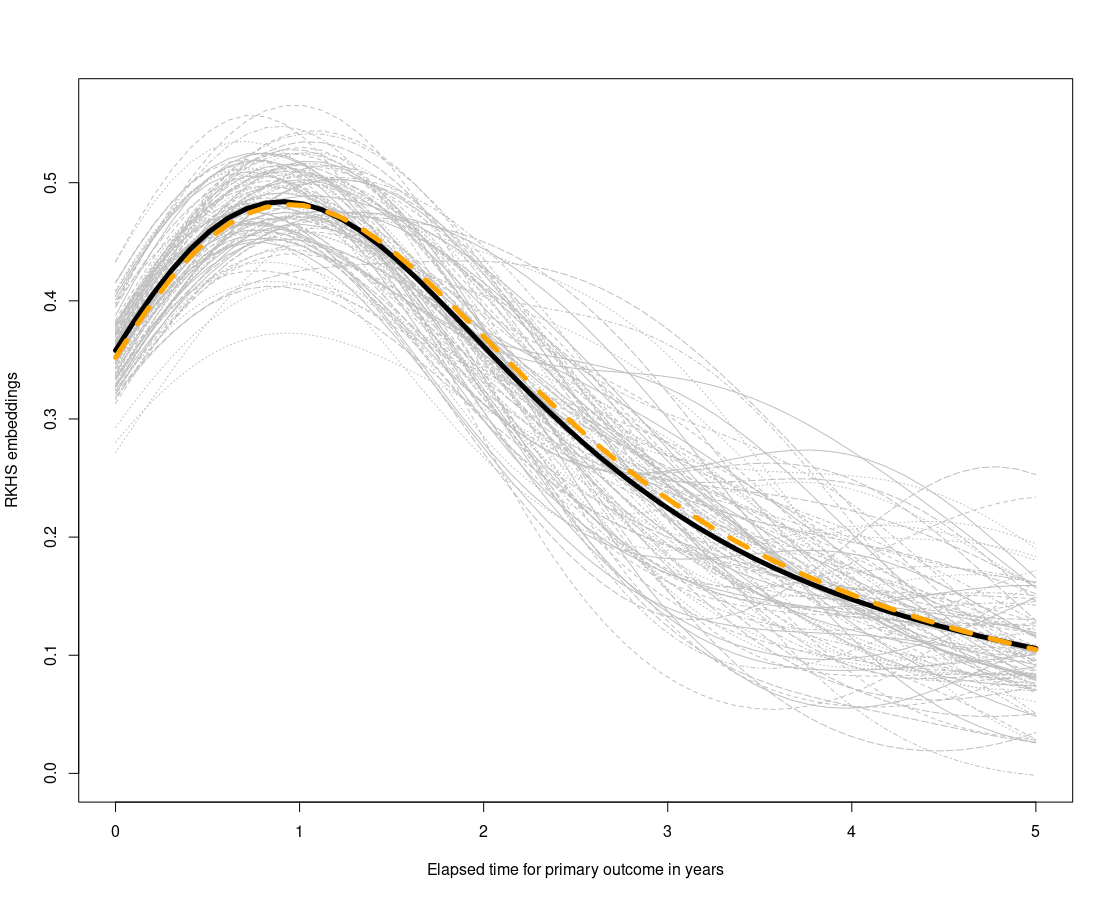}
  \captionof{figure}{$n=200$}
  \label{fig:test2}
\end{minipage}

\begin{minipage}{.55\textwidth}
  \centering
  \includegraphics[width=\linewidth]{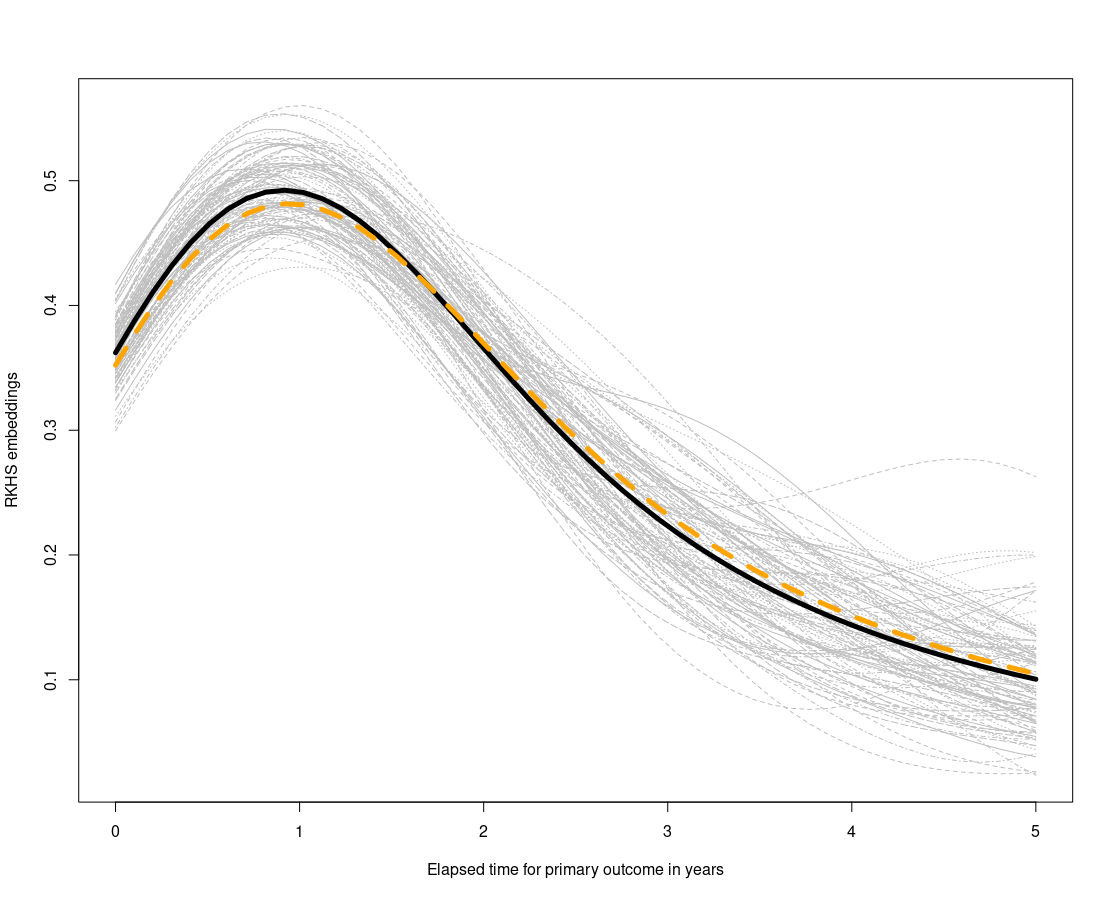}
  \captionof{figure}{$n=300$}
  \label{fig:test1}
\end{minipage}%
\begin{minipage}{.55\textwidth}
  \centering
  \includegraphics[width=\linewidth]{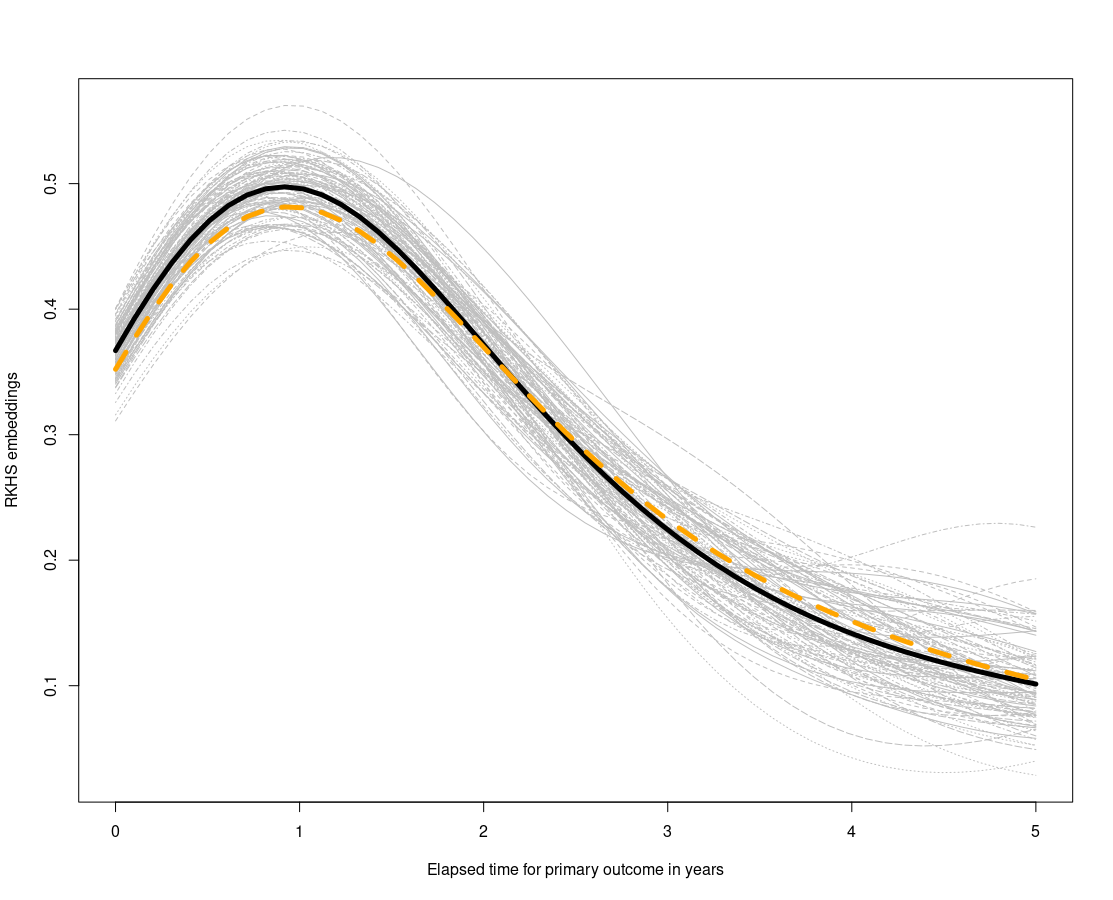}
  \captionof{figure}{$n=500$}
  \label{fig:test2}
\end{minipage}
\caption[Four different simulation scenarios]{The black solid line represents the average of the $B=100$ runs. The dashed yellow line is a numerical approximation of the population counterfactual mean embedding. Each grey line corresponds with one simulation draw. Simulation parameters $c^0$ and $c^1$ were tuned by hand in order to set a censoring percentage of approximately $75 \%$ (on average across simulations, only $25\%$ of information was complete). }
\label{fig:sim}
\end{figure}

\section{Application to SPRINT: a landmark trial in public health}

NIH’s Systolic Blood Pressure Intervention Trial (SPRINT) was conducted to inform the new blood pressure medication guidelines in the US by testing the effects that a lower blood pressure target has on reducing heart disease risk. Observational studies had shown that individuals with lower systolic blood pressure (SBP) levels had fewer complications and deaths due to cardiovascular disease (CVD). Building on this observation, the NIH's Systolic Blood Pressure Intervention Trial (SPRINT) was designed to test the effects of a lower blood pressure target on reducing heart disease risk. Specifically, SPRINT aimed to compare treating high blood pressure to a target SBP goal of less than 120 mmHg against treating to a goal of less than 140 mmHg. 

However, it has been seen in major clinical trials that a reduction of SBP is intimately connected to a reduction of DBP (diastolic blood pressure). Despite this association, it is debated whether low DBP leads to undesirable cardiovascular outcomes, such as a reduction of coronary flow, myocardial infarction, heart failure, or cardiovascular death \citep{dbp1,dbp2,dbp3}. This suggests that intensive systolic blood pressure therapy may result in an excessive reduction of DBP and therefore result in an undesired increase in cardiovascular risk. Nevertheless, SPRINT showed that intensive treatment was clearly associated with a reduced risk of CVD and was even finished early because the results were so convincing \citep{sprintne}. Given the conclusions drawn by SPRINT, the research question is now whether it is possible to decompose the total effect of treatment on the primary outcome into a (natural) direct effect and a (natural) indirect effect through low DBP (induced by the treatment). 

The debate on intensive blood pressure therapy is ongoing. \cite{lee} set out to ascertain whether there is an association between the onset of diastolic hypotension during treatment and negative outcomes. To achieve this, they utilized a conventional Cox PH model, using diastolic blood pressure as a time-varying exposure and adjusting for certain baseline factors. \cite{stensrudathero} aimed to explore whether a formal mediation analysis, utilizing the SPRINT data, could identify whether intensive SBP treatment impacts cardiovascular outcomes via a pathway that involves diastolic blood pressure DBP below 60 mmHg. They claim that \textit{the association between treatment-induced diastolic blood pressure and cardiovascular outcomes suffers from confounding} \citep{stensrudme}.

We illustrate how our methodological contribution manages to perform the desired effect decomposition both across pathways and, importantly, across time thanks to the RKHS formulation.  A consensus answer to the problem would be relevant to the medical community because, as mentioned, SPRINT ultimately informed the new blood pressure guidelines by demonstrating that a lower blood pressure target can significantly reduce heart disease risk.

\begin{figure}[hbt!]
  \centering
  \includegraphics[width=.6\textwidth]{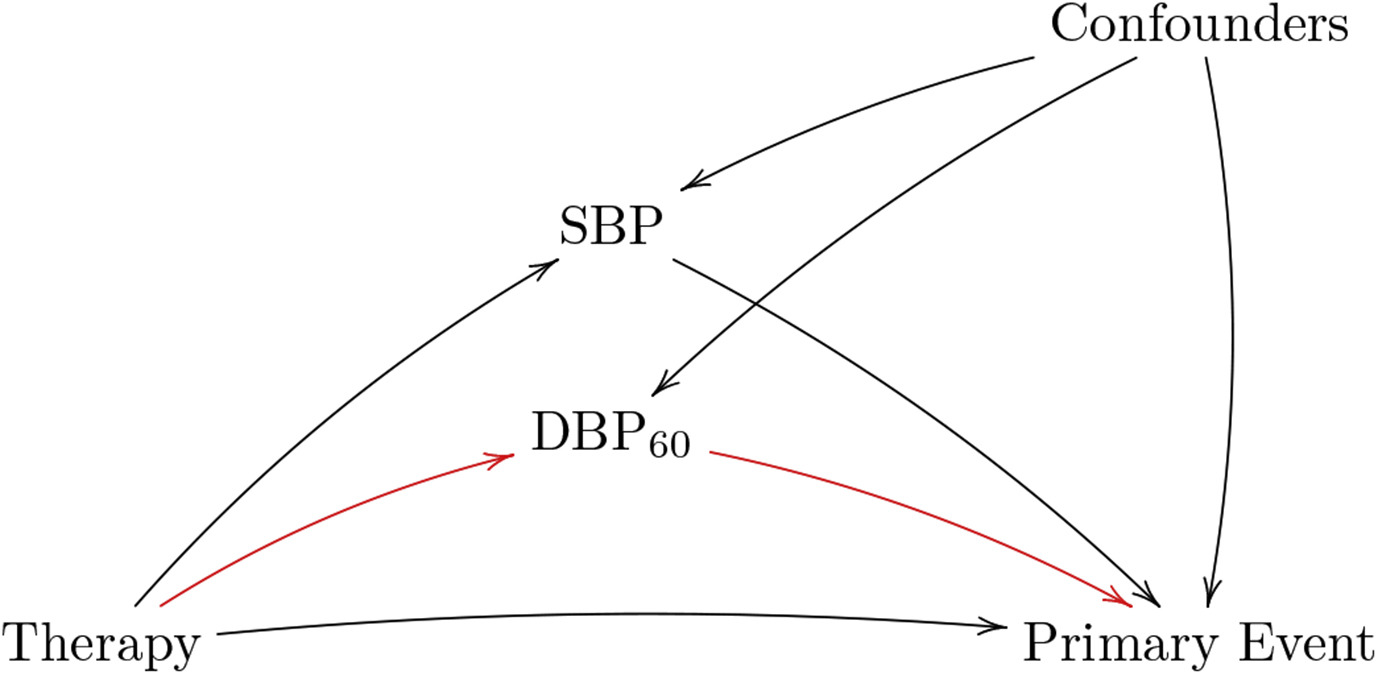} 
  \caption[Geyser data: binned histogram, Silverman's and another
  kernel]
  {DAG depicting underlying causal structure of the medical problem; taken from \cite{stensrudathero}. The primary aim of their investigation was to decompose the total effect of intensive therapy versus standard therapy into two separate pathways: (i) a direct pathway that encompasses all effects not involving a reduction in diastolic blood pressure below 60 mmHg, comprising the advantageous impact of reducing systolic blood pressure, and (ii) an indirect pathway that acts through on-treatment DBP below 60 mmHg and has the potential to be deleterious.}
  \label{fig:geys2}
\end{figure}

\subsection{Description of the dataset}

We conducted our analysis among 2269 participants older than 75 years old who had non-missing values for the covariates. Our response variable \verb|T_PRIMARY| is observed time-to-primary outcome in days, which is a CVD composite endpoint of myocardial infarction, stroke, acute coronary syndrome, acute decompensated heart failure (ADHF), and CVD death. Composite outcomes are postulated to enhance the evaluation of treatment effects on infrequent outcomes, such as mortality in smaller trials, and serve as a convenient means of representing a broader spectrum of beneficial effects resulting from an intervention \citep{cordoba2010definition}. Even considering several events to build the primary endpoint, the percentage of uncensored observations is 11 \% and 7 \% in the control and treatment arms respectively. These high incomplete information percentages render the consideration of censoring mandatory, consitituting a strong motivation factor for the development of our new estimator. 

The treatment indicator for each patient \verb|INTENSIVE| is encoded such that 1 indicates lower SBP target of 120 mmHg and 0 indicates standard treatment (target SBP: 140 mm Hg). The vector of covariates for each patient includes \verb|`DBP.1yr'| (DBP one year after randomisation) and baseline characteristics we want to adjust for:  \verb|`DBP.rz'| DBP at randomization,  \verb|`AGE'|, \verb|`CHR'| Cholesterol mg/dL, \verb|`GLUR'| Glucose mg/dL, \verb|`HDL'| High-Density Lipoprotein ("good") cholesterol direct mg/dL, \verb|`TRR'| Triglycerides, mg/dL, \verb|`UMALCR'| Urine Albumin/Creatinine ratio  \verb|`BMI'| Body mass index kg/m2.

\subsection{Naive analysis of SPRINT}

We might start by stratifying the observations into two groups: one with DBP $\leq 60$ mmHg one year after randomisation (encoded \verb|DBP60=0|) and a group with $> 60$ mmHg one year after randomisation (encoded \verb|DBP60=1DBP60=0|). Then we regress the primary endpoint against the newly created indicator variable using vanilla Cox PH. 

\begin{small}
\begin{verbatim}
> library('survival')

> primary=Surv(t,delta)
> coxdbp60 <- coxph(primary ~ DBP60)
> summary(coxdbp60)

coxph(formula = primary ~ DBP60)

  n= 2269, number of events= 210 

         coef exp(coef) se(coef)     z Pr(>|z|)  
DBP601 0.2823    1.3262   0.1475 1.914   0.0556 .
---
Signif. codes:  0 ‘***’ 0.001 ‘**’ 0.01 ‘*’ 0.05 ‘.’ 0.1 ‘ ’ 1

       exp(coef) exp(-coef) lower .95 upper .95
DBP601     1.326      0.754    0.9933     1.771

Concordance= 0.529  (se = 0.017 )
Likelihood ratio test= 3.54  on 1 df,   p=0.06
Wald test            = 3.66  on 1 df,   p=0.06
Score (logrank) test = 3.69  on 1 df,   p=0.05
\end{verbatim}
\end{small}
The estimates provided by the model fit would confirm the original suspicions of the medical community, stating that low DBP leads to increased cardiovascular risk. This is because the estimate of the hazard ratio \verb|exp(coef)=1.326| $>1$. 

The second step we take is to fit two Kaplan-Meier curves, one for each arm of the SPRINT trial (\verb|INTENSIVE=0| target SBP of 140 mmHg, \verb|INTENSIVE=1| target SBP of 120 mmHg) and produce the plot displayed in Figure \ref{fig:km}. This serves as a quantitative basis for three facts. First, the paradox we are facing becomes empirically confirmed because now treatment defined as SBP lowering intervention seems to be effective (the blue curve therein estimating the survival function of the treatment population is higher after one year). Second, the estimates of the survival functions are crossing. This is a well-known problem in the field of time-to-event analysis \citep{crossing}, directly invalidating the proportional hazards assumption. Third, this would confirm observationally the overall positive results of the SPRINT trial, asserting that intensive SBP control results in cardiovascular benefit.

\begin{figure}[hbt!]
  \centering
  \includegraphics[width=.8\textwidth]{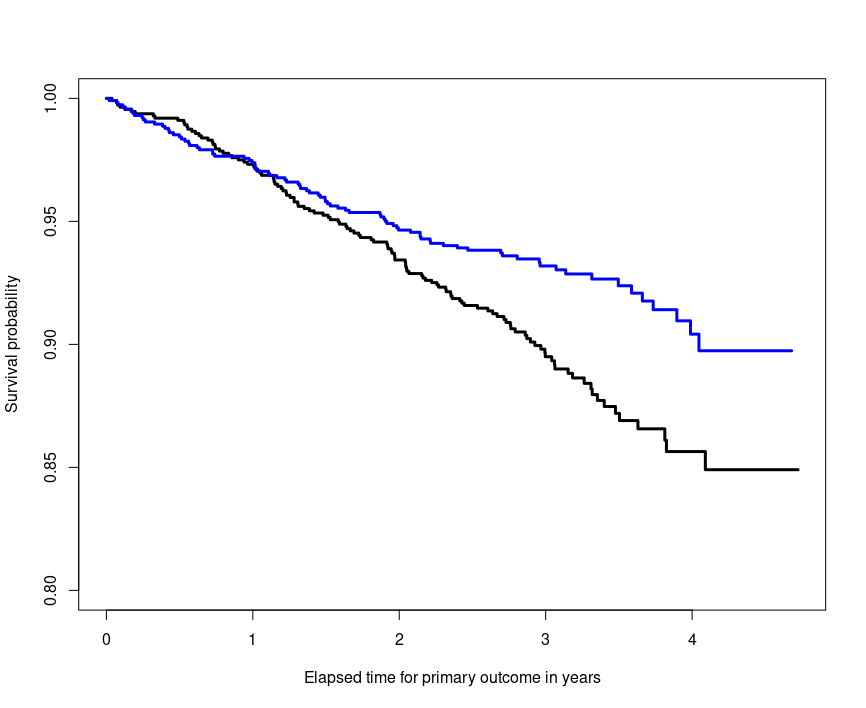} 
  \caption[Geyser data: binned histogram, Silverman's and another
  kernel]
  {Two Kaplan-Meier fits aimed to estimation of $S_{\tilde{T}^1 \mid Z=1}(t)$ in blue and $S_{\tilde{T}^0\mid Z=0}(t)$ in black }
  \label{fig:km}
\end{figure}
\subsection{Conclusions of our analysis of SPRINT}

Our results agree with \citet{stensrudathero}: The increased risk in subjects with diastolic pressure below 60 cannot fully be explained by the intensive treatment itself, but may be due to other factors. A complete description of the results is included in Figure \ref{fig:results}

 \begin{figure}[hbt!]
  \centering
  \includegraphics[width=.8\textwidth]{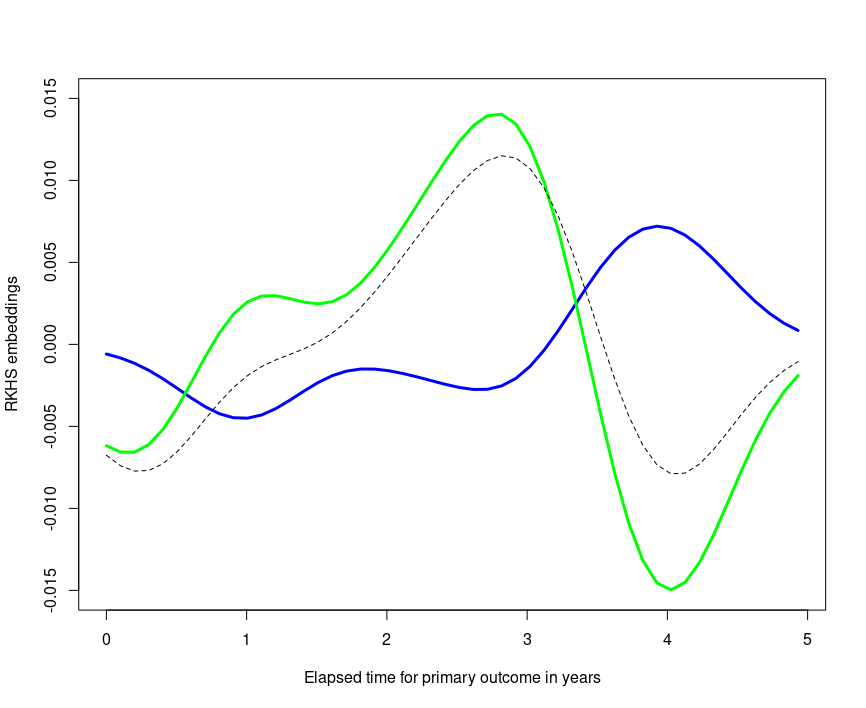} 
  \caption[Geyser data: binned histogram, Silverman's and another
  kernel]
  {We look at Equation \ref{decomp} in the RKHS scale. $\hat \mu_{T\langle 0 \mid 0\rangle} - \hat \mu_{T\langle 0 \mid 1\rangle}$ is represented through the blue line. Similarly, we plot in green $\hat \mu_{T\langle 0 \mid 1\rangle} - \hat \mu_{T\langle 1 \mid 1\rangle}$. An important fact to bear in mind is that (A) in Equation \ref{decomp} is zero if and only if the population counterpart of the blue function is zero, meaning that there is \textit{no} distributional effect on the outcomes arising from the difference in covariate distributions. Likewise, if the green line is zero at the population level then there are no distributional effects for the treated. The dashed line is the sum of both colored functions: the kernel mean embedding of (A) + (B) $= S_{{T}^1}-S_{{T}^0} $ representing the \textit{realized}- not counterfactual- survival probability gain in the intensive treatment arm. The plot can be interpreted as follows: the shift in DBP decrease's distribution across treatment arms has an effect in the opposite direction to the counterfactual treatment effect on the intensive treatment arm; being the latter stronger. During the majority of the study time span (approx. from month 6 until the beginning of the third year), intensively reducing SBP pays off the harmful consequences of the reduction of DBP that comes hand-by-hand. As a consequence, survival probability is increased during this period, which is translated in the dotted line being above zero. Nevertheless, SBP reduction to the lowest target impacts survival negatively in the long term. Interestingly, the inherent reduction in DBP becomes beneficial and tries to compensate for the harmful effect in the long run brought by intensive SBP reduction.  }
  \label{fig:results}
\end{figure}

\section{Discussion}

The main contribution of this paper is the introduction of a novel framework that enables model-free counterfactual inference, opening the doors to many tasks, including counterfactual prediction, hypothesis testing, and clustering analysis. The proposed methods just rely on the Kaplan-Meier estimator \citep{gerds}. While the assumptions that make this possible- i.e., that prevent users from explicitly including covariates in the involved weights- pose a limitation from a practical viewpoint, our method could be locally fitted using the nearest neighbor paradigm \citep{tamas2023recursive} and remain robust to different types of censoring mechanisms.

Moreover, our fundamental approach can be adapted to handle more complex scenarios, such as inverse probability weighting or doubly robust estimators \citep{rubin2007doubly}. The key advantage of our methods is their model-free nature, allowing for learning of complex non-linear relationships between predictors and response variables, given certain smoothness and moment conditions. Many existing models in counterfactual inference are semi-parametric in nature, like the Cox model, which may involve parameters that do not have a causally valid interpretation \citep{martinussen2022causality}. 

The adaptation of such estimators to the fully non-parametric context faces technical difficulties, as seen with the k-NN algorithmand Beran's estimator. However, by adopting the mean embedding toolset, we can create model-free estimators without the technical difficulties. Kernel mean embeddings can be  interpreted as conditional depth bands, proving their usefulness for inferential tasks and other descriptive analyses, as demonstrated in the paper. Additionally, the geometry of kernel mean embeddings allows for a natural interpretation of quantities that are present in the potential outcomes framework, such as the effect of distributional shifts on the covariates. 

From a theoretical standpoint, we discuss the implications of using weights involving the Kaplan-Meier estimator. Roughly speaking, these weights assume independence between survival and censoring times, as well as conditional independence of the censoring indicator and the covariates given the realized times (Assumption v.).

Let us briefly depict the consequences of relaxing these hypotheses. A regular estimator is efficient if it achieves the lowest possible variance among regular estimators, and this optimality notion is established with tools from semiparametric inference \citep{kosorok2008introduction}. Specifically, the Kaplan-Meier integral is asymptotically efficient only under the assumption of independence between survival and censoring times with respect to the covariates \citep{stute1996, laan2003unified}. This is intuitive because the covariate values of the censored times are never observed in empirical estimates. However, if we relax this hypothesis and consider a scenario where $C$ is not independent of $T$ given $Z$, and $\delta$ is not independent of $X$ given $T$ and $Z$, then the resulting estimator will be inefficient; as these assumptions were guaranteeing that the conditional survival distribution of the censoring times $G$ does not depend on the covariates.

To address this issue, we can use a Cox model to estimate ${G}_0(t,x)$. This would be more efficient than using Kaplan-Meier under conscious violation of the previous assumptions, but even this approach will never achieve full efficiency. As per adaptive estimation principle \cite{bickel1993efficient}, a larger censoring model leads to more efficient weights estimation. However, in high-dimensional settings- the scenario we often face when covariates are present in biomedicine, the performance of this method may be poor. This may be potentially alleviated by doubly robust estimators \citep{benkeser2017doubly}. 

The proportional hazards model is the prevailing regression model used in survival analysis. However, a standard Cox analysis does not provide insight into how the effects evolve over time, potentially resulting in loss of valuable information. With the usual Cox analysis, coefficients are typically assumed to remain constant over time, making it challenging to incorporate any deviations from this assumption. There exist a number of alternatives, for instance Aalen's additive regression model \citep{aalen2008survival}. It offers the benefit of permitting covariate effects to vary independently over time. However, Aalen's model performs repeated regressions at each event time, running into instability and overfitting problems when not many events (understood as uncensored observations) are present in the data. Figure \ref{fig:results} illustrates the importance of our estimator as a tool to assess relative risk between treatment arms across time in a natural way without involving time-dependent hazard ratios. All being said, reliably answering inferential questions about time-varying causal effects is a true milestone in contemporary statistics, even reaching areas like Reinforcement Learning \citep{murphy}.

In conclusion, our proposed estimator offers a flexible and powerful tool for estimating counterfactual distributions in observational studies with right-censored data. The model-free nature of our approach makes it applicable to diverse scenarios where traditional methods may be unsuitable. Our estimator can be used in combination with or as an alternative to existing parametric and semiparametric causal survival models, further expanding the range of available options for researchers.

\newpage
\section*{Appendix 1: proofs of auxiliary results}\label{ap1}
\subsection*{Proof of Lemma \ref{l2}}

We have for an arbitrary $G \in \mathcal{F}$
$$\begin{aligned}
&\hat{R}_ {\varepsilon,n}(\hat{F}+G)=\frac{1}{n}\sum_{i=1}^nW_i\left\|h_i-\hat{F}\left(X_i\right) -G\left(X_i\right)\right\|_{\mathcal{H}}^2+\varepsilon\|\hat{F}+G\|_\mathcal{F}^2= \\
&\frac{1}{n}\sum_{i=1}^nW_i\left(\left\|h_i-\hat{F}\left(X_i\right) \right\|_{\mathcal{H}}^2+\left\|G\left(X_i\right)\right\|_{\mathcal{H}}^2-2\langle h_i-\hat{F}(X_i),{G}(X_i) \rangle_{\mathcal{H}}\right)+\varepsilon\left(\|\hat{F}\|_\mathcal{F}^2+\|G\|_\mathcal{F}^2 -2 \langle \hat{F},G\rangle_{\mathcal{F}}\right) = \\
&\hat{R}_ {\varepsilon,n}(\hat{F}) + \frac{1}{n}\sum_{i=1}^nW_i\left(\left\|G\left(X_i\right)\right\|_{\mathcal{H}}^2-2\langle h_i- \hat{F}(X_i),{G}(X_i) \rangle_{\mathcal{H}}\right)+\varepsilon\left(\|G\|_\mathcal{F}^2 +2 \langle\hat{F},G\rangle_{\mathcal{F}}\right) 
\end{aligned}$$ 

Assuming that $\hat{F}$ is a minimizer implies $\hat{R}_ {\varepsilon,n}(\hat{F}) \leq \hat{R}_ {\varepsilon,n}(\hat{F}+G)$ and therefore it is necessary that for all $G \in \mathcal{F}$

$$ \frac{1}{n}\sum_{i=1}^nW_i\langle h_i- \hat{F}(X_i),{G}(X_i) \rangle_{\mathcal{H}}=\varepsilon \langle\hat{F},G\rangle_{\mathcal{F}}$$
Now we try the solution $\hat F=\sum_{i=1}^n \Gamma\left(\cdot, X_i\right)\left(c_i\right) \in \mathcal{F}$ and we use the properties of $\Gamma$ to develop the inner product

$$\langle\hat{F}, G\rangle_{\mathcal{F}}=\langle\sum_{i=1}^n \Gamma\left(\cdot, X_i\right)\left(c_i\right), G\rangle_{\mathcal{F}}=\sum_{i=1}^n\langle c_i, G(X_i)\rangle_{\mathcal{H}}$$

So 

$$ \frac{1}{n}\sum_{i=1}^nW_i\langle h_i- \hat{F}(X_i),{G}(X_i) \rangle_{\mathcal{H}}=\varepsilon \sum_{i=1}^n\langle c_i, G(X_i)\rangle_{\mathcal{H}}$$
Therefore

$$ \sum_{i=1}^n \left(W_i\langle h_i- \hat{F}(X_i),{G}(X_i) \rangle_{\mathcal{H}}-n\varepsilon \langle c_i, G(X_i)\rangle_{\mathcal{H}}\right)=0$$

Now we use again the expression $\hat F=\sum_{i=1}^n \Gamma\left(\cdot, X_i\right)\left(c_i\right)$ to rewrite 

$$\begin{aligned}\langle\hat F(X_i),G(X_ i)\rangle_{\mathcal{H}} =  \sum_{j=1}^n\langle \Gamma(X_i ,X_j)( c_j),G(X_ i)\rangle_{\mathcal{H}}\end{aligned}$$

For the previous identity to be true for all $G \in \mathcal{F}$ it is sufficient that for $1 \leq i \leq n$ the following holds 

$$ W_i( h_i- \sum_{j=1}^n \Gamma(X_i ,X_j) (c_j))-n\varepsilon  c_i=0$$

that can be written as 

$$ W_i h_i=\sum_{j=1}^n W_i \Gamma(X_i ,X_j)(c_j)+n\varepsilon  c_i\delta_{ij}$$

\subsection*{Proof of Lemma \ref{l3}}\

Define $g:=\left(\widehat{\mathcal{C}^*}_{X X}+\varepsilon I\right)^{-1} \hat{\mu}_{X_1}$. \\ Since $\hat{\mu}_{X_1}=\left(\widehat{\mathcal{C}^*}_{X X}+\varepsilon I\right) g=\frac{1}{n} \sum_{j=1}^n W_j k\left(\cdot, X_j\right) g\left(X_j\right)+ \varepsilon g$, \\ we have $\hat{\mu}_{X_1}\left(X_l\right)=\frac{1}{n} \sum_{j=1}^n W_j k\left(X_{l}, X_j\right) g\left(X_j\right)+ \varepsilon g\left(X_{l}\right)=\frac{1}{n}(KW\boldsymbol{g})_{l}+ \varepsilon \boldsymbol{g}_{l}$ for all $l=1, \ldots, n$, where $K \in \mathbb{R}^{n \times n}$ with $K_{i j}=k\left(X_i, X_j\right)$ and $\boldsymbol{g}=\left(g\left(X_1\right), \ldots, g\left(X_n\right)\right)^{\top} \in \mathbb{R}^n$. Therefore $\boldsymbol{\mu}=$ $\frac{1}{n}(KW+n \varepsilon I) \boldsymbol{g}$, where $\boldsymbol{\mu}:=\left(\hat{\mu}_{X_1}\left(X_1\right), \ldots, \hat{\mu}_{X_1}\left(X_n\right)\right)^{\top}=\widetilde{K} 1_m$, where $1_m=(1 / m, \ldots, 1 / m)^{\top}$ and $\widetilde{K} \in \mathbb{R}^{n \times m}$ with $\widetilde{K}_{i j}=k\left(X_i, X_j^{1}\right)$. Thus $\boldsymbol{g}=n(KW+n \varepsilon I)^{-1} \boldsymbol{\mu}$. Lastly, we use the definition of $\widehat{\mathcal{C}^*}_{T X}$ to express $\hat{\mu}_{\langle 0 \mid 1\rangle}=\frac{1}{n} \sum_{i=1}^n W_i\ell\left(\cdot, Y_i\right) g\left(X_i\right)=\sum_{i=1}^n W_i\beta_i \ell\left(\cdot, Y_i\right)$, where $\beta=\left(\beta_1, \ldots, \beta_n\right)^{\top}=$ $n^{-1} \boldsymbol{g}=(KW+n \varepsilon I)^{-1} \boldsymbol{\mu}$, which is the original expression of $\hat{\mu}_{T\langle 0 \mid 1\rangle}$.

\subsection*{Proof of Lemma \ref{lem:ineq}}
\begin{adjustwidth}{-2.5cm}{-1cm}
\begin{align*}
&\left\|\frac{1}{n} \left(\sum_{i=1}^nW_iK_iL_i-\frac{2}{n} \left(\sum_{i=1}^nW_i K_i\right)\left(\sum_{i=1}^nW_iL_i\right) + \frac{1}{n^2}\left(\sum_{i=1}^nW_i K_i\right)\left(\sum_{i=1}^nW_iL_i\right)\left(\sum_{i=1}^nW_i\right)\right)-E[K(X^0) L(T^0)]\right\|_{\mathcal{G}\otimes\mathcal{H}}\\
=&\left\|\frac{1}{n} \sum_{i=1}^nW_iK_iL_i-2 \left(\frac{1}{n}\sum_{i=1}^nW_i K_i\right)\left(\frac{1}{n}\sum_{i=1}^nW_iL_i\right) + \left(\frac{1}{n}\sum_{i=1}^nW_i K_i\right)\left(\frac{1}{n}\sum_{i=1}^nW_iL_i\right)\left(\frac{1}{n}\sum_{i=1}^nW_i\right)-E[K(X^0) L(T^0)]\right\|_{\mathcal{G}\otimes\mathcal{H}} \\
=&\left\|\frac{1}{n} \sum_{i=1}^nW_iK_iL_i-E[K(X^0) L(T^0)]-\left(2 -\frac{1}{n}\sum_{i=1}^nW_i\right)\left(\frac{1}{n}\sum_{i=1}^nW_i K_i\right)\left(\frac{1}{n}\sum_{i=1}^nW_iL_i\right) \right\|_{\mathcal{G}\otimes\mathcal{H}} \\
\leq& \left\|\frac{1}{n} \sum_{i=1}^nW_iK_iL_i-E[K(X^0) L(T^0)]\right\|_{\mathcal{G}\otimes\mathcal{H}} +\left|2 -\frac{1}{n}\sum_{i=1}^nW_i\right|\left\|\left(\frac{1}{n}\sum_{i=1}^nW_i K_i\right)\left(\frac{1}{n}\sum_{i=1}^nW_iL_i\right) \right\|_{\mathcal{G}\otimes\mathcal{H}}
\\
\leq &\left\|\frac{1}{n} \sum_{i=1}^nW_iK_iL_i-E[K(X^0) L(T^0)]\right\|_{\mathcal{G}\otimes\mathcal{H}} +\left|2 -\frac{1}{n}\sum_{i=1}^nW_i\right|\left\|\left(\frac{1}{n}\sum_{i=1}^nW_i K_i\right)\right\|_{\mathcal{G}}\left\|\left(\frac{1}{n}\sum_{i=1}^nW_iL_i\right) \right\|_{\mathcal{H}} 
\end{align*}
\end{adjustwidth}

\subsection*{Proof of Corollary \ref{cons}}
By Triangle's Inequality,

$$\begin{aligned}
\| \widehat{\mathcal{C}}_{T X} & \left(\widehat{\mathcal{C}}_{X X}+\varepsilon_n I\right)^{-1} \hat{\mu}_{X_1}-\mu_{T\langle 0 \mid 1\rangle} \|_{\mathcal{H}} \\
\leq & \left\|\widehat{\mathcal{C}}_{T X}\left(\widehat{\mathcal{C}}_{X X}+\varepsilon_n I\right)^{-1} \hat{\mu}_{X_1}-\mathcal{C}_{TX}\left(\mathcal{C}_{X X}+\varepsilon_n I\right)^{-1} \mu_{X_1}\right\|_{\mathcal{H}} \quad \textrm{(Stochastic error)} \\
& +\left\|\mathcal{C}_{TX}\left(\mathcal{C}_{X X}+\varepsilon_n I\right)^{-1} \mu_{X_1}-\mu_{T\langle 0 \mid 1\rangle}\right\|_{\mathcal{H}}\quad \textrm{(Approximation error)}
\end{aligned}$$

\subsection*{Proof of Corollary \ref{final_rate}}

The proof of Theorem \ref{rate_stoch} has been written with $\alpha=0$ (we can assume so thanks to assumption iii.)). The proof for $\alpha>0$ is straightforward using Lemma 24 in \cite{Muandet_2017} and in this case the rate of the stochastic error is $O_p\left(n^{-1 / 2} \varepsilon_n^{\min (-1+\alpha,-1 / 2)}\right)$. The proof is completed by showing that the rate for the approximation error is $O\left(\varepsilon_n^{(\alpha+\beta) / 2}\right)$ (see E.3 in \cite{cme}).
\newpage
\section{Appendix: empirical check of $\sqrt{n}$ rate under linear truth}\label{ap2}
\begin{small}
 
\end{small}
 \begin{figure}[hbt!]
  \centering
  \includegraphics[width=.6\textwidth]{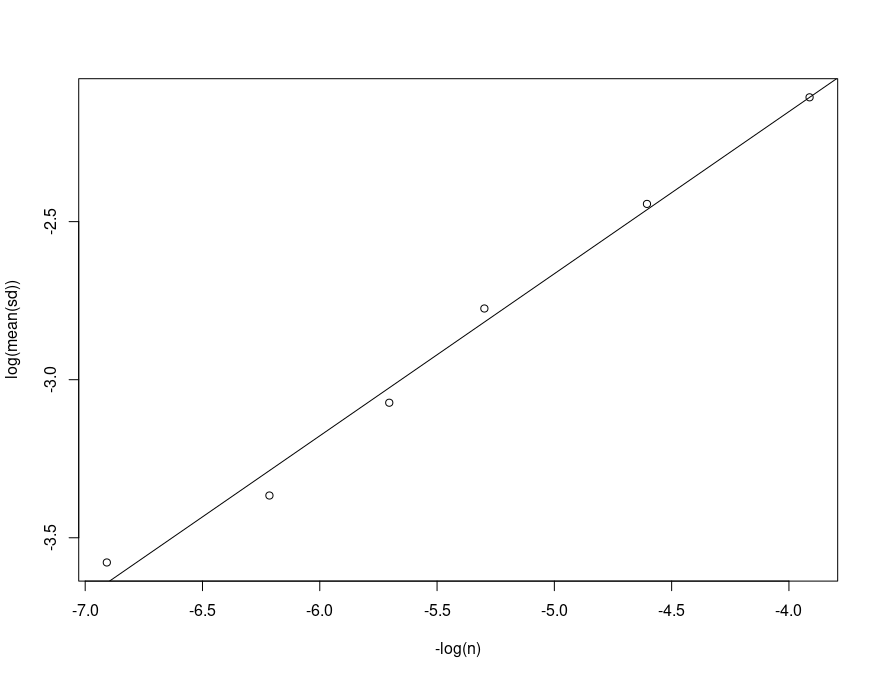} 
  \caption[Geyser data: binned histogram, Silverman's and another
  kernel]
  {Let $V$ be average across time-points of point empirical standard deviation computed through simulations. Assuming that $V=C\cdot n^{-\gamma}$, we can write $\log(V)=\log(C)-\gamma n$. The console output below shows that $\gamma$ is close to 0.5.}
  \label{fig:convrate}
\end{figure}

\begin{Verbatim}[commandchars=\\\{\}]
Call:
lm(formula = lsds ~ mlogn)

Residuals:
         1          2          3          4          5          6 
 8.818e-05  1.834e-02 -7.885e-02  4.292e-02 -4.727e-02  6.477e-02 

Coefficients:
            Estimate Std. Error t value Pr(>|t|)    
(Intercept) -0.10064    0.13888  -0.725    0.509    
mlogn        \textcolor{red}{0.51281}   0.02512  20.418  3.4e-05 ***
---
Signif. codes:  0 ‘***’ 0.001 ‘**’ 0.01 ‘*’ 0.05 ‘.’ 0.1 ‘ ’ 1

Residual standard error: 0.06088 on 4 degrees of freedom
Multiple R-squared:  0.9905,	Adjusted R-squared:  0.9881 
F-statistic: 416.9 on 1 and 4 DF,  p-value: 3.398e-05
\end{Verbatim}

\underline{Note}: This experiment uncovers that our estimator shows an adaptive behaviour: when the underlying model is simulated to be linear, the convergence rate is faster: $n^{-1/2}$.

\newpage

\section{Acknowledgements}

 We would like to express our gratitude to Prof. Peter Bühlmann for his valuable advice during the development of this work. We also extend our gratitude to Prof. Mats Stensrud for his encouragement to use data from the Systolic Blood Pressure Trial (SPRINT). We are grateful for the kind gesture of Prof. Thomas A. Gerds and Prof. Krikamol Muandet, having provided helpful clarifications regarding weak convergence of the estimator. We are thankful to the National Heart, Lung and Blood Institute for providing us with access to this valuable dataset. 
\section{Funding}

C.G.M is supported by the Fundación Barrié via Bolsas de Posgrao no Estranxeiro.


\addtocontents{toc}{\vspace{.5\baselineskip}}
\phantomsection
\addcontentsline{toc}{chapter}{\protect\numberline{}{Bibliography}}




\cleardoublepage
\end{document}